\newtheorem{thm}{Theorem}[section]
\newtheorem{cor}[thm]{Corollary}
\newtheorem{lem}[thm]{Lemma}
\newtheorem{prop}[thm]{Proposition}
\theoremstyle{definition}
\newtheorem{exmp}{Example}[section]
\theoremstyle{remark}
\numberwithin{equation}{section}
\renewcommand{\div}{\operatorname{div}}
\newcommand{\R}{\mathbb{R}}
\newcommand{\supp}{\operatorname{supp}}
\newcommand{\pM}{{\partial M}}
\def\tilde{\widetilde}
\def \bfo {\begin {eqnarray*} }
\def \efo {\end {eqnarray*} }
\def \ba {\begin {eqnarray*} }
\def \ea {\end {eqnarray*} }
\def \beq {\begin {eqnarray}}
\def \eeq {\end {eqnarray}}
\def \supp {\hbox{supp }}
\def \det {\hbox{det}}
\def \p {\partial}
\def\tilde{\widetilde}
\def \bfo {\begin {eqnarray*} }
\def \efo {\end {eqnarray*} }
\def \ba {\begin {eqnarray*} }
\def \ea {\end {eqnarray*} }
\def \beq {\begin {eqnarray}}
\def \eeq {\end {eqnarray}}
\def \supp {\hbox{supp }}
\def \det {\hbox{det}}
\def \p {\partial}
\begin{document}

 \title{Reconstruction of Betti numbers of manifolds for anisotropic Maxwell and Dirac systems}

\author{Katsiaryna Krupchyk\footnote{Department of Mathematics and Statistics, 
        University of Helsinki,
         P.O. Box 68,
         FI-00014   Helsinki,
         Finland, \sf{katya.krupchyk@helsinki.fi}}\\
 Yaroslav Kurylev\footnote{Department of Mathematics,
University College London,
 Gower Street, London, WC1E 6BT, UK, \sf{Y.Kurylev@ucl.ac.uk}}
\\
 Matti Lassas,\footnote{
Department of Mathematics and Statistics, 
        University of Helsinki,
         P.O. Box 68,
         FI-00014   Helsinki,
         Finland, \sf{matti.lassas@helsinki.fi}}
}

\date{}

\maketitle



\maketitle



\begin{abstract}
We consider an invariant formulation of the system of Maxwell's
equations for an anisotropic medium on a compact orientable Riemannian $3$--manifold $(M,g)$ with nonempty
boundary. The system can be completed to  a Dirac type first order
system on the manifold.  We show that the
Betti numbers of the manifold can be recovered from the dynamical response operator for the Dirac system given on a part of the boundary. 
In the case of the original physical Maxwell system, assuming that the entire boundary is known, all Betti numbers of the manifold can also be determined from the dynamical response operator given on a part of the boundary. Physically, this operator maps  the tangential component of the electric field  into the tangential component of the magnetic field on the boundary.

\end{abstract}

\bigskip
\textbf{Mathematics Subject Classification 2000:} 58J45, 35R30, 35Q61.





\section{Introduction}

Recently there has been a lot of interest in inverse problems for Maxwell's equations in Euclidean domains in $\R^3$ and on compact Riemannian manifolds, see \cite{CarOlaSalo, KenSalUhl, KurLass06, KurLasSom06, OlaPaiSom1993, OlaPaiSom2003, OlaSom1996}. In a smooth bounded domain $M\subset \R^3$, Maxwell's equations are given by
\begin{equation}
\label{eq_Max_normal}
\begin{aligned}\textrm{curl}\, E(x,t)=&-B_t(x,t),\\
\textrm{curl}\, H(x,t)=&D_t(x,t),
\end{aligned}
\end{equation}
where $E$ and $H$ are the electric and magnetic fields, and $B$
and $D$ are the magnetic flux density and the electric
displacement. The fields $E$ and $D$,
and similarly, the fields $H$ and $B$ are related by the
constitutive relations,
\begin{equation}
\label{eq_constitutive}
 D(x,t)=\epsilon(x)E(x,t),\quad
B(x,t)=\mu(x) H(x,t),
\end{equation}
where the electric permittivity $\epsilon(x)$ and the magnetic
permeability $\mu(x)$ are $C^\infty$-smooth positive-definite
$3\times 3$-matrix valued functions on $M$.
The initial boundary value problem for the time dependent Maxwell's equations consists of \eqref{eq_Max_normal}, \eqref{eq_constitutive} together with the conditions
\begin{equation}
\label{eq_bound-cond}
\begin{aligned}
E(x,t)|_{t=-\tau_f}=0,\quad H(x,t)|_{t=-\tau_f}=0,\\
n\times E|_{\p M\times \R_-}=f,
\end{aligned}
\end{equation}
where $n$ is the unit exterior normal to $\p M$, and $\tau_f>0$ is such  that $f(x, t)=0$ for $t< -\tau_f$. 
The inverse problem associated with  \eqref{eq_Max_normal}, \eqref{eq_constitutive}, and \eqref{eq_bound-cond}, is the problem of reconstruction of 
electromagnetic parameters $\epsilon(x)$ and $\mu(x)$ from the knowledge of the response operator
\begin{equation}
\label{eq_response_int}
R:n\times E|_{\p M\times \R_-}\mapsto n\times H|_{\p M \times \R_-}. 
\end{equation}
From the point of view of modern electrodynamics and classical field theories,  it is natural to adopt an invariant approach to Maxwell's equations, where the domain $M$ is replaced by  a general $3$-dimensional 
smooth compact oriented connected Riemannian manifold, and the vector fields $E$, $H$, $D$, and $B$ are viewed as  differential forms, see \cite{Thi79}.  
The geometric inverse problem  is then to determine the unknown manifold $M$, together with the electromagnetic parameters, from the response operator  
\eqref{eq_response_int}, which is now defined in terms of boundary traces of the corresponding differential forms. 
See also  \cite{LasUhl01, LasTayUhl03}, where the problem of the reconstruction of a Riemannian manifold from  the Dirichlet-to-Neumann operator for harmonic functions, has been studied.

In the context of time-harmonic Maxwell's equations in an isotropic setting, i.e. when the parameters $\epsilon(x)$ and $\mu(x)$ are scalar,  the inverse problem for bounded domains in $\R^3$ was solved in \cite{OlaPaiSom1993}, see also \cite{ColPai92,  McD97, OlaSom1996}. Much less is known in the anisotropic case. To the best of our knowledge, the positive results in this direction have only been established in the case of an anisotropic medium of a special type, characterized by the polarization independent velocity of the wave propagation.
In terms of the electromagnetic parameters, this amounts to the existence of $\alpha(x)>0$ such that $\epsilon(x)=\alpha(x)\mu(x)$. 
In this case, under a certain geometric condition, 
it is shown in  \cite{KenSalUhl} that, if the
conformal class of $\epsilon(x)$ and $\mu(x)$ is known, the stationary boundary
measurements identify uniquely the conformal factors. 
There are also counterexamples for uniqueness of time-harmonic inverse problems involving very anisotropic and degenerate material parameters 
 \cite{GreKurLasUhl2009, GreKurLasUhl2007}. 
In \cite{KurLasSom06}, the inverse problem for Maxwell's equations in the time domain for an anisotropic medium was studied, still assuming that the wave propagation is independent of the polarization. It was shown that the Riemannian manifold and the electromagnetic parameters can be recovered from the dynamical response operator similar to \eqref{eq_response_int}, given on a finite time interval.  See also \cite{BelIsaPesShar200} for reconstruction of the wave speed.

In this paper we shall be concerned with the case of a general anisotropic medium. Specifically, working in the geometric setting of Maxwell's equations on a manifold $M$, we are able to recover the Betti numbers of the manifold from the dynamical response operator, given on an open subset of the boundary. This can be viewed as the first step in attempting to reconstruct  the geometry and topology of the underlying manifold, in the full generality of the anisotropic case.  Let us remark that in the isotropic case, as well as in the case when $\epsilon(x)=\alpha(x)\mu(x)$, $\alpha(x)>0$, the reconstruction of the manifold and the electromagnetic parameters is based on controllability results, which in turn rely crucially on generalizations of the Tataru unique continuation theorem \cite{EllNakTat02, KurLasSom06}. In our opinion, the main obstacle in the study of the inverse problem for the general anisotropic Maxwell system 
is due to the fact that such unique continuation results do not seem to be available in this case. 

We would like also to mention the paper \cite{BelSha2008}, where the reconstruction of the Betti numbers of a manifold from the Dirichlet-to-Neumann operator for the Hodge Laplacian on  differential forms is studied. 

The plan of the paper is as follows. Section 2 is devoted to the description of our geometric setup, including the completion of the Maxwell system to a Dirac type elliptic system, and contains the statement of the main results. 
We also discuss  examples that illustrate the significance of our results for the determination of the topological structure of an unknown object from the boundary measurements.
In Section 3, we prove the identifiability of the Betti numbers in the complete Maxwell  case, while in Section 4 we establish our results for the physical Maxwell system.

\section{Preliminaries and statement of the main results}

\subsection{Invariant definition of Maxwell's equations} 
Let $(M,g_0)$ be a
smooth compact oriented connected Riemannian 3-manifold $M$ with
$\partial M\not=\emptyset$. We shall first rewrite equations \eqref{eq_Max_normal}, \eqref{eq_constitutive}, in the anisotropic case,  using the language of differential forms . In doing so,  we shall follow closely \cite{KurLasSom06}, where the case $\epsilon(x)=\alpha(x)\mu(x)$, $\alpha(x)>0$, is considered.

Let  $\Lambda^k T^*M$, $k=0,1,\dots,3$,  be the bundle of the $k$-th exterior differential forms and $\Lambda T^* M$ be the full bundle of differential forms. 
Denote by $C^\infty(M,\Lambda^k T^*M)$ the space of smooth real exterior differential forms of degree $k$.

Define the fiberwise duality
between $1$-forms and vector fields,
\[
^ \flat:\ C^\infty (M, T M)\to C^\infty(M, \Lambda^1 T^*M),\quad X^\flat(Y)=g_0(X,Y),
\]
or in a coordinate system for $X=a^i\frac{\partial}{\partial
x^i}$, $X^\flat=g_{0,ij}a^jdx^i$. This map is bijective and has
the following properties \cite{Sch95}:
\[
(\textrm{curl}\,X)^\flat=*_0d X^\flat,\quad (\textrm{div}
X)^\flat=*_0d*_0X^\flat,
\]
where 
\[
d: C^\infty(M, \Lambda^k T^* M)\to C^\infty(M, \Lambda^{k+1} T^ *M)
\] 
is the exterior differential
and $*_0$ is the Hodge operator with respect to the metric $g_0$,
acting fiberwise, 
\[
*_0:C^\infty(M, \Lambda^k T^ *M) \to C^\infty(M, \Lambda^{3-k} T^ *M).
\]

We define the $1$-forms $\mathcal{E}=E^\flat$ and
$\mathcal{H}=H^\flat$ and the $2$-forms $\mathcal{B}=*_0B^\flat$
and $\mathcal{D}=*_0D^\flat$. Using the identity
$*_0*_0=\text{id}$, valid in the $3$-dimensional case, we can write
Maxwell's equations \eqref{eq_Max_normal} in terms of differential forms as
\begin{equation}
\label{eq_maxwell_diff_forms}
d\mathcal{E}=-\partial_t\mathcal{B},\quad
d\mathcal{H}=\partial_t\mathcal{D}.
\end{equation}

Consider now the constitutive relations \eqref{eq_constitutive}.
We 
shall determine a metric $g_\epsilon$ such that the
Hodge operator with respect to this metric, denoted by
$*_\epsilon$, satisfies 
\begin{equation}
\label{eq_cons_forms_1} \mathcal{D}=*_0(\epsilon
E)^\flat=*_\epsilon\mathcal{E}.
\end{equation}
In local coordinates $(x^1,x^2,x^3)$, we have  $\epsilon
E=\epsilon^i_k E^k\frac{\partial}{\partial x^i}$, $(\epsilon
E)^\flat=g_{0,ij}\epsilon^j_k E^kdx^i$ and thus, the middle term
of \eqref{eq_cons_forms_1} yields
\begin{align*}
*_0(\epsilon
E)^\flat&=*_0(g_{0,ij}\epsilon^j_kE^kdx^i)=\frac{1}{2}
\sqrt{\det(g_0)}g_0^{il}g_{0,ij}\epsilon_k^jE^ks_{lpq}dx^p\wedge
dx^q\\
&=\frac{1}{2} \sqrt{\det(g_0)}\epsilon_k^jE^ks_{jpq}dx^p\wedge
dx^q,
\end{align*}
where $s_{lpq}$ is the Levi-Civita permutation symbol. The
right hand side of \eqref{eq_cons_forms_1} implies that 
\[
*_\epsilon\mathcal{E}=*_\epsilon(g_{0,ik}E^kdx^i)=\frac{1}{2}
\sqrt{\det(g_\epsilon)}g_\epsilon^{ij}g_{0,ik}E^ks_{jpq}dx^p\wedge
dx^q.
\]
Hence, \eqref{eq_cons_forms_1} is valid if we set
\[
\sqrt{\det(g_\epsilon)}g_\epsilon^{ij}g_{0,ik}=\sqrt{\det(g_0)}\epsilon^j_k.
\]
By taking the determinants of  both sides, we get 
\[
\sqrt{\det(g_\epsilon)}=\det(\epsilon)\sqrt{\det(g_0)}.
\]
 Defining
\[
g_\epsilon^{ij}=\frac{1}{\det(\epsilon)} \epsilon^j_kg_0^{ki},
\]
we see that 
\eqref{eq_cons_forms_1} is valid. Similarly, we see that
for the metric tensor $g_\mu^{ij}=\frac{1}{\det(\mu)}
\mu^j_kg_0^{ki}$,  we have
\[
\mathcal{B}=*_0(\mu H)^\flat=*_\mu\mathcal{H}.
\]
Hence, the constitutive relations take the form
\[
\mathcal{D}(x,t)=*_\epsilon\mathcal{E}(x,t),\quad
\mathcal{B}(x,t)=*_\mu\mathcal{H}(x,t).
\]
We consider the waves that satisfy the initial conitions
\[
B(x,t)|_{t=-\tau}=0,\quad D(x,t)|_{t=-\tau}=0, \quad \tau>0,
\]
Applying the divergence operator  to \eqref{eq_Max_normal}, we have
\[
\div B(x,t)=0,\ \div D(x,t)=0,\quad t\in \R,\quad  x\in M.
\]
In terms of differential forms these equations imply that 
\begin{equation}
\label{eq_compat} d\mathcal{B}=0,\quad d\mathcal{D}=0.
\end{equation}

In the further considerations, we will use only the pair
$(\mathcal{E},\mathcal{B})$ and denote it by
$(\omega^1,\omega^2)$, where $\omega^1=\mathcal{E}$
and $\omega^2=\mathcal{B}$. 
The compatibility conditions \eqref{eq_compat} imply that
\begin{equation}
\label{eq_max1} d\omega^2=0,\quad d*_\epsilon \omega^1=0.
\end{equation}
It follows from \eqref{eq_maxwell_diff_forms} that 
\begin{equation}
\label{eq_max2} \omega^2_t=-d\omega^1,\quad \omega^1_t=*_\epsilon
d*_\mu\omega^2.
\end{equation}
Let us consider the following codifferentials,
\begin{equation}
\label{eq_codiff}
\delta_{\epsilon,\mu}\omega^2=*_\epsilon d*_\mu
\omega^2, \quad \delta_{\mu,\epsilon}\omega^k=-*_\mu
d*_\epsilon \omega^k, k=1,3.
\end{equation}
Then \eqref{eq_max1} and \eqref{eq_max2} yield
\begin{equation}
\label{eq_max}
\begin{aligned} &\omega^1_t=\delta_{\epsilon,\mu}\omega^2,\quad \delta_{\mu,\epsilon}\omega^1=0,\\
& \omega^2_t=-d\omega^1,\quad d\omega^2=0.
\end{aligned}
\end{equation}
These equations are called \emph{Maxwell's equations for forms in
the divergence free case} on a Riemannian manifold $M$.

We shall now extend the above equations to the full bundle of exterior
differential forms $\Lambda T^*M $. To this end, we
introduce auxiliary forms, $\omega^0\in C^\infty(M)$ and
$\omega^3\in C^\infty(M, \Lambda^3 T^*M)$, which vanish in the electromagnetic
theory, by
\[
\omega^0_t=\delta_{\mu,\epsilon}\omega^1,\quad
\omega^3_t=-d\omega^2.
\]
Since $\omega^0=0$ and $\omega^3=0$ in the electromagnetic theory, we can
modify equations \eqref{eq_max} to have
\begin{align*}
&\omega^1_t=-d\omega^0+\delta_{\epsilon,\mu}\omega^2,\quad \omega^3_t=-d\omega^2,\\
& \omega^2_t=-d\omega^1+\delta_{\mu,\epsilon}\omega^3,\quad
\omega^0_t=\delta_{\mu,\epsilon}\omega^1,
\end{align*}
or, in the matrix form,
\begin{equation}
\label{eq_complete_maxwell} \omega_t+D\omega=0,
\end{equation}
where
$\omega=(\omega^0,\omega^1,\omega^2,\omega^3)$
and the operator $D$ is given by
\begin{equation}
\label{eq_matrix}
D=\begin{pmatrix} 0&-\delta_{\mu,\epsilon} & 0 & 0\\
d & 0 & -\delta_{\epsilon,\mu} & 0\\
0& d & 0 & -\delta_{\mu,\epsilon}\\
0& 0 & d &0
\end{pmatrix}.
\end{equation}
Equations \eqref{eq_complete_maxwell}, \eqref{eq_matrix} are
called \emph{the complete Maxwell system}. Notice that the operator $D$ is of the Dirac type.

\subsection{Function spaces}

Define the $L^2$-inner product in the space $C^\infty(M, \Lambda^kT^* M)$ as follows,
\begin{align*}
(\omega^k,\eta^k)_{L^2_\mu}=&\int_M
\omega^k\wedge*_{\mu}\eta^k,\quad
k=0,2,\\
(\omega^k,\eta^k)_{L^2_\epsilon}=&\int_M
\omega^k\wedge*_{\epsilon}\eta^k,\quad
k=1,3,
\end{align*}
and denote by $L^2(M, \Lambda^k T^* M)$ the completion of $C^\infty(M, \Lambda^kT^* M)$ in
the corresponding norm. 
In the complexified case, we take the corresponding sesquilinear extension of the inner product.  We denote by $H^s(M, \Lambda^k T^*  M)$ the standard Sobolev space of $k$-forms. 

The natural domain of the exterior differential $d$ in $L^2(M, \Lambda^k T^* M)$ is 
\[
H(d,\Lambda^k T^* M)=\{\omega^k\in L^2(M, \Lambda^k T^* M):d\omega^k\in L^2(M, \Lambda^{k+1} T^* M)\},
\]
and we define 
\[
H(\delta_{\epsilon,\mu},\Lambda^k T^* M)=\{\omega^k\in L^2(M, \Lambda^k T^* M):\delta_{\epsilon,\mu}\omega^k\in L^2(M, \Lambda^{k-1} T^* M)\},
\]
and similarly for $\delta_{\mu,\epsilon}$. 

Let $i^*:C^\infty(M, \Lambda^k T^* M)\to C^\infty(\p M, \Lambda^k T^* M)$ be the pull-back of the
 imbedding $i:\partial M\to M$. Then we define the
\emph{tangential trace} of $k$-forms as
\[
\mathbf{t}:C^\infty(M, \Lambda^k T^* M)\to C^\infty(\p M, \Lambda^k T^* M), \quad
\mathbf{t}\omega^k=i^*\omega^k,\quad 
k=0,1,2,
\]
and the \emph{normal trace} as
\begin{align*}
\mathbf{n}:\ C^\infty(M, \Lambda^k T^* M)& \to C^\infty(\p M, \Lambda^{3-k} T^* M),\\
\mathbf{n}\omega^k=i^*(*_\epsilon \omega^k),&\
 k=1,3,\quad 
\mathbf{n}\omega^2=i^*(*_\mu \omega^2).
\end{align*}
Set
\[
\langle\mathbf{t}\omega^k,\mathbf{n}\eta^{k+1}\rangle=\int_{\partial
M}\mathbf{t}\omega^k\wedge\mathbf{n}\eta^{k+1},\quad k=0,1,2.
\]
With this notation, Stokes' formulae for differential forms can be written as
\begin{equation}
\label{eq_stokes_formulae}
\begin{aligned}
(d\omega^0,\eta^1)_{L^2_\epsilon}-(\omega^0,\delta_{\mu,\epsilon}\eta^1)_{L^2_\mu}=
\langle\mathbf{t}\omega^0,\mathbf{n}\eta^1\rangle,\\
(d\omega^1,\eta^2)_{L^2_\mu}-(\omega^1,\delta_{\epsilon,\mu}\eta^2)_{L^2_\epsilon}=
\langle\mathbf{t}\omega^1,\mathbf{n}\eta^2\rangle,\\
(d\omega^2,\eta^3)_{L^2_\epsilon}-(\omega^2,\delta_{\mu,\epsilon}\eta^3)_{L^2_\mu}=
\langle\mathbf{t}\omega^2,\mathbf{n}\eta^3\rangle.
\end{aligned}
\end{equation}
Using \eqref{eq_matrix} and \ref{eq_stokes_formulae}, we get 
\begin{equation}
\label{eq_stokes_D}
(D\omega,\eta)_{L^2}+(\omega,D\eta)_{L^2}=\langle\mathbf{t}\omega,\mathbf{n}\eta\rangle+
\langle\mathbf{t}\eta,\mathbf{n}\omega\rangle,
\end{equation}
where $\mathbf{t}\omega=(\mathbf{t}\omega^0,\mathbf{t}\omega^1,\mathbf{t}\omega^2)$,
$\mathbf{n}\omega=(\mathbf{n}\omega^1,\mathbf{n}\omega^2,\mathbf{n}\omega^3)$,
and
\[
\langle\mathbf{t}\omega,\mathbf{n}\eta\rangle=\langle\mathbf{t}\omega^0,\mathbf{n}\eta^1\rangle+
\langle\mathbf{t}\omega^1,\mathbf{n}\eta^2\rangle+\langle\mathbf{t}\omega^2,\mathbf{n}\eta^3\rangle.
\]
Here we take $\omega,\eta\in \mathcal{H}$, where 
\begin{align*}
\mathcal{H}&=H(d, \Lambda^0 T^* M)\times [H(d,  \Lambda^1 T^* M)\cap H(\delta_{\mu,\epsilon}, \Lambda^1 T^* M)]\\
&\times
 [H(d,  \Lambda^2 T^* M)\cap H(\delta_{\epsilon,\mu}, \Lambda^2 T^* M)]
 \times H(\delta_{\mu,\epsilon}, \Lambda^3 T^* M).
\end{align*}

It will be convenient to write $\delta$ to stand for both $\delta_{\mu,\epsilon}$ and $\delta_{\epsilon,\mu}$, when no risk of ambiguity is possible. 
There are well defined extensions of the boundary trace operators $\mathbf{t}$ and $\mathbf{n}$ to the spaces  $H(d,\Lambda^k T^*M)$ and $H(\delta,\Lambda^k T^*M)$, see  \cite{Paquet}.

\begin{lem} The operators $\mathbf{t}$ and $\mathbf{n}$ can be extended to continuous surjective maps
\begin{equation}
\label{eq_t_1}
\mathbf{t}:H(d,\Lambda^k T^*M)\to H^{-1/2}(d,\p M, \Lambda^k T^* M),
\end{equation}
\begin{equation}
\label{eq_n_1}
\mathbf{n}:H(\delta,\Lambda^{k+1} T^*M)\to H^{-1/2}(d,\p M, \Lambda^{2-k} T^* M),
\end{equation}
where $H^{-1/2}(d,\p M, \Lambda^kT^* M)$ is given by
\[
\{\omega^k\in H^{-1/2}(\p M,\Lambda^k T^*  M):d\omega^k\in H^{-1/2}(\p M, \Lambda^{k+1} T^*  M)\}.
\]
\end{lem}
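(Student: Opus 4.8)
The plan is to deduce the statement for $\mathbf{n}$ from the one for $\mathbf{t}$, then to prove the continuity of $\mathbf{t}$ by an integration–by–parts argument, and finally to prove surjectivity by exhibiting a bounded right inverse built in a collar of $\partial M$. For the reduction, note that by the definition of the codifferentials in \eqref{eq_codiff} the Hodge stars $*_\epsilon$ and $*_\mu$ conjugate the exterior derivative $d$ on $\Lambda^{2-k}T^*M$ to $\pm\delta$ on $\Lambda^{k+1}T^*M$; since $\epsilon,\mu$ are smooth, $*_\epsilon$ and $*_\mu$ are topological isomorphisms of the relevant $L^2$- and $H^1$-spaces, with norms equivalent to the standard ones, and $\mathbf{n}\omega^{k+1}=\mathbf{t}(*\omega^{k+1})$ by definition of $\mathbf{n}$. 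Hence $*$ restricts to an isomorphism of $H(\delta,\Lambda^{k+1}T^*M)$ onto $H(d,\Lambda^{2-k}T^*M)$ intertwining $\mathbf{n}$ with $\mathbf{t}$, so \eqref{eq_n_1} follows from \eqref{eq_t_1} and it suffices to treat $\mathbf{t}$.

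For continuity I would argue first on $\omega^k\in C^\infty(M,\Lambda^kT^*M)$, where $\mathbf{t}\omega^k=i^*\omega^k$ and the pullback identity $d_{\partial M}(\mathbf{t}\omega^k)=\mathbf{t}(d\omega^k)$ holds. Using that the wedge pairing $\langle\phi,\psi\rangle=\int_{\partial M}\phi\wedge\psi$ realizes $H^{-1/2}(\partial M,\Lambda^kT^*M)$ as the dual of $H^{1/2}(\partial M,\Lambda^{2-k}T^*M)$: given $\psi\in C^\infty(\partial M,\Lambda^{2-k}T^*M)$, pick via the classical componentwise trace theorem a form $\eta^{k+1}\in H^1(M,\Lambda^{k+1}T^*M)$ with $\mathbf{n}\eta^{k+1}=\psi$ and $\|\eta^{k+1}\|_{H^1}\le C\|\psi\|_{H^{1/2}}$; then Stokes' formula \eqref{eq_stokes_formulae} gives $\langle\mathbf{t}\omega^k,\psi\rangle=(d\omega^k,\eta^{k+1})-(\omega^k,\delta\eta^{k+1})$, whence $|\langle\mathbf{t}\omega^k,\psi\rangle|\le C(\|\omega^k\|_{L^2}+\|d\omega^k\|_{L^2})\|\psi\|_{H^{1/2}}$ and therefore $\|\mathbf{t}\omega^k\|_{H^{-1/2}(\partial M)}\le C\|\omega^k\|_{H(d,\Lambda^kT^*M)}$. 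Applying this bound to $d\omega^k$ in place of $\omega^k$ and using $d_{\partial M}\mathbf{t}\omega^k=\mathbf{t}d\omega^k$ together with $d(d\omega^k)=0$ controls $\|d_{\partial M}\mathbf{t}\omega^k\|_{H^{-1/2}(\partial M)}$ as well, so $\|\mathbf{t}\omega^k\|_{H^{-1/2}(d,\partial M,\Lambda^kT^*M)}\le C\|\omega^k\|_{H(d,\Lambda^kT^*M)}$. Since $C^\infty(M,\Lambda^kT^*M)$ is dense in $H(d,\Lambda^kT^*M)$ for the graph norm (Friedrichs mollification together with a partition of unity and a slight push-in of the form along the collar near $\partial M$), $\mathbf{t}$ extends to the asserted bounded operator and $d_{\partial M}\mathbf{t}=\mathbf{t}d$ persists.

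For surjectivity I would construct a bounded right inverse $E$. Fix a collar $\partial M\times[0,1)$ with normal coordinate $\rho$ and a cutoff $\chi$ equal to $1$ near $\rho=0$ with support in the collar, and write $k$-forms there as $\alpha(\rho)+d\rho\wedge\beta(\rho)$ with $\alpha(\rho),\beta(\rho)$ forms on $\partial M$ of degrees $k$ and $k-1$; then $\mathbf{t}$ reads off $\alpha(0)$, and $d$ of this form is $d_{\partial M}\alpha(\rho)+d\rho\wedge(\partial_\rho\alpha(\rho)-d_{\partial M}\beta(\rho))$. Given $\theta\in H^{-1/2}(d,\partial M,\Lambda^kT^*M)$, decompose it by Hodge theory on $\partial M$ into exact, coexact and harmonic parts; a short spectral computation (using that $\Delta_{\partial M}$ commutes with $d_{\partial M}$, so $\|d_{\partial M}e_j\|^2=\lambda_j$ on coexact eigenforms) shows that the $H^{-1/2}(d)$-norm controls the $H^{-1/2}(\partial M)$-norms of the exact and harmonic parts and the stronger $H^{1/2}(\partial M)$-norm of the coexact part. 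Take $\alpha(\rho)$ to be the Poisson-type extension obtained by expanding $\theta$ in eigenforms of $\Delta_{\partial M}$ and inserting factors $e^{-\rho\sqrt{1+\lambda_j}}$; for the coexact modes $\beta=0$ already works, since $d_{\partial M}\alpha(\rho)$ and $\partial_\rho\alpha(\rho)$ are then controlled by $\|\theta\|_{H^{1/2}}$ on that part, while for the (closed) exact and harmonic modes $d_{\partial M}\alpha(\rho)=0$ and one chooses $\beta(\rho)$ so that $d_{\partial M}\beta(\rho)$ cancels the exact part of $\partial_\rho\alpha(\rho)$ (using $e_j=d_{\partial M}g_j$ on the positive eigenspaces), the harmonic modes being harmless as they span a finite-dimensional space. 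Multiplying by $\chi$, the form $E\theta$ so obtained lies in $H(d,\Lambda^kT^*M)$ with $\mathbf{t}E\theta=\theta$, and the bound $\|E\theta\|_{H(d,\Lambda^kT^*M)}\le C\|\theta\|_{H^{-1/2}(d,\partial M,\Lambda^kT^*M)}$ reduces to the elementary inequality $\int_0^1 e^{-2\rho\sqrt{1+\lambda_j}}\,d\rho\le\tfrac12(1+\lambda_j)^{-1/2}$ together with the comparison $\sqrt{1+\lambda_j}\le C\lambda_j$ on the positive part of the spectrum.

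I expect the surjectivity — that is, identifying $H^{-1/2}(d,\partial M,\cdot)$ as \emph{exactly} the image of $\mathbf{t}$ — to be the main obstacle: the right inverse must balance the half-derivative loss in the tangential direction against $L^2$-control of the normal derivative, which is precisely what forces the normal component $\beta(\rho)$ and the use of the Hodge decomposition on $\partial M$. The continuity estimate and the density statements, by contrast, are routine. All of this is in essence the content of \cite{Paquet}, which we follow.
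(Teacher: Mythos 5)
Your argument is correct, and it reconstructs in detail the standard proof of this trace theorem; the paper itself gives no proof of this lemma but simply cites Paquet's work, which proceeds along exactly the lines you describe (duality against $H^{1/2}$ liftings via Stokes' formula for the continuity, and a right inverse built from the Hodge decomposition on $\partial M$ with a compensating normal component $d\rho\wedge\beta$ for the surjectivity). In particular your reduction of the $\mathbf{n}$-statement to the $\mathbf{t}$-statement via the Hodge stars $*_\epsilon$, $*_\mu$ is consistent with the paper's definitions \eqref{eq_codiff} of the codifferentials and of the normal trace, so nothing further is needed.
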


Let 
$
H_t(d,\Lambda^k T^* M)
$
stand for the kernel of  \eqref{eq_t_1}, and $H_n(\delta,\Lambda^{k+1} T^* M)$ will denote the kernel of the operator \eqref{eq_n_1}. 

Using \eqref{eq_stokes_formulae}, we can verify the following result in a standard way, see also \cite[Lemma 1.3]{KurLasSom06}. 

\begin{lem} 
\label{lem_adjoint}
The Hilbert space adjoint of 
\[
d:L^2(M,\Lambda^0 T^* M)\to L^2(M, \Lambda^1 T^*M),
\] 
equipped with the domain $H_t(d,\Lambda^0 T^* M)$, is the operator $\delta_{\mu,\epsilon}$ with the domain $H(\delta_{\mu,\epsilon}, \Lambda^1 T^* M)$. The  Hilbert space adjoint of 
\[
\delta_{\mu,\epsilon}:L^2(M, \Lambda^1 T^*M)\to L^2(M,\Lambda^0 T^* M),
\]
 equipped with the domain $H(\delta_{\mu,\epsilon},\Lambda^1 T^*M)$, is the operator $d$ with the domain $H_t(d,\Lambda^0 T^* M)$.

\end{lem}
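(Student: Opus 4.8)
The plan is to verify the two adjointness claims directly from the Stokes formula. Consider the operator $d\colon L^2(M,\Lambda^0 T^*M)\to L^2(M,\Lambda^1 T^*M)$ with domain $H_t(d,\Lambda^0 T^*M)$; call it $d_0$. I must show $d_0^* = \delta_{\mu,\epsilon}$ with domain $H(\delta_{\mu,\epsilon},\Lambda^1 T^*M)$. First I would establish the inclusion $\delta_{\mu,\epsilon}\subseteq d_0^*$: take $\eta^1\in H(\delta_{\mu,\epsilon},\Lambda^1 T^*M)$ and any $\omega^0\in H_t(d,\Lambda^0 T^*M)$. By the first line of \eqref{eq_stokes_formulae} (valid for smooth forms and extended by continuity to these graph-norm domains, using the continuity of $\mathbf{t}$ and $\mathbf{n}$ from the preceding Lemma),
\[
(d\omega^0,\eta^1)_{L^2_\epsilon}-(\omega^0,\delta_{\mu,\epsilon}\eta^1)_{L^2_\mu}=\langle\mathbf{t}\omega^0,\mathbf{n}\eta^1\rangle=0,
\]
since $\mathbf{t}\omega^0=0$. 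Hence the functional $\omega^0\mapsto (d\omega^0,\eta^1)_{L^2_\epsilon}$ is bounded on the domain of $d_0$, so $\eta^1\in D(d_0^*)$ and $d_0^*\eta^1=\delta_{\mu,\epsilon}\eta^1$.

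The reverse inclusion $d_0^*\subseteq\delta_{\mu,\epsilon}$ is where the real content lies. Suppose $\eta^1\in D(d_0^*)$, i.e. there is $\zeta^0\in L^2(M,\Lambda^0 T^*M)$ with $(d\omega^0,\eta^1)_{L^2_\epsilon}=(\omega^0,\zeta^0)_{L^2_\mu}$ for all $\omega^0\in H_t(d,\Lambda^0 T^*M)$. Testing against $\omega^0\in C_0^\infty(\i M)$ (compactly supported in the interior) identifies $\delta_{\mu,\epsilon}\eta^1=\zeta^0$ in the distributional sense, so in particular $\delta_{\mu,\epsilon}\eta^1\in L^2$ and thus $\eta^1\in H(\delta_{\mu,\epsilon},\Lambda^1 T^*M)$; then $d_0^*\eta^1=\zeta^0=\delta_{\mu,\epsilon}\eta^1$. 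This step relies on the fact that $\delta_{\mu,\epsilon}$, being $-*_\mu d *_\epsilon$ with $*_\epsilon,*_\mu$ smooth bundle isomorphisms, is a first-order differential operator whose distributional action is determined on interior test forms. The second statement, that $\delta_{\mu,\epsilon}$ with domain $H(\delta_{\mu,\epsilon},\Lambda^1 T^*M)$ has adjoint $d$ with domain $H_t(d,\Lambda^0 T^*M)$, then follows by taking adjoints once more: since $d_0$ is densely defined and closed (its graph is closed because $d$ is a differential operator and the tangential trace is continuous on $H(d,\cdot)$, so $H_t(d,\cdot)$ is a closed subspace of the graph-norm space), we have $d_0^{**}=d_0$, i.e. $(\delta_{\mu,\epsilon})^*=d_0$.

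The main obstacle is the identification of the domain in the reverse inclusion: one must show that membership of $\eta^1$ in $D(d_0^*)$ forces not only $\delta_{\mu,\epsilon}\eta^1\in L^2$ but also that there is no hidden boundary term obstructing $\eta^1$ from lying in the full space $H(\delta_{\mu,\epsilon},\Lambda^1 T^*M)$ — i.e. that this space carries \emph{no} boundary condition, in contrast to $H_t(d,\cdot)$. This is handled by the density of $C^\infty(M,\Lambda^1 T^*M)$ in $H(\delta_{\mu,\epsilon},\Lambda^1 T^*M)$ in the graph norm together with the surjectivity of $\mathbf{n}$ onto $H^{-1/2}(d,\p M,\cdot)$ from the preceding Lemma, which guarantees that the boundary pairing $\langle\mathbf{t}\omega^0,\mathbf{n}\eta^1\rangle$ genuinely vanishes for all admissible $\omega^0$ precisely because $\mathbf{t}\omega^0=0$, and not for any spurious reason. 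All of this is standard once the trace machinery of \cite{Paquet} is in place, so I would simply cite \cite[Lemma 1.3]{KurLasSom06} for the analogous computation and indicate the modifications needed to accommodate the two distinct codifferentials $\delta_{\mu,\epsilon}$ and $\delta_{\epsilon,\mu}$ and the two weighted inner products $L^2_\mu$, $L^2_\epsilon$.
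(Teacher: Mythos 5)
Your argument is correct and is exactly the ``standard way'' the paper alludes to: the forward inclusion via the first line of \eqref{eq_stokes_formulae} with the boundary pairing killed by $\mathbf{t}\omega^0=0$, the reverse inclusion by testing against interior $C_0^\infty$ forms (which suffices because $H(\delta_{\mu,\epsilon},\Lambda^1 T^*M)$ carries no boundary condition), and the second claim by $d_0^{**}=d_0$ since $d_0$ is closed and densely defined. The only cosmetic remark is that your final paragraph about a possible hidden boundary obstruction is redundant --- the interior-test-form step already places $D(d_0^*)$ inside $H(\delta_{\mu,\epsilon},\Lambda^1 T^*M)$, and nothing more is required.
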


It is clear that analogous statements hold for the operators $d$ and $\delta$, acting on forms of higher degree.

We shall need the following result. 

\begin{prop}

\begin{itemize}
\item [(i)] The operator $D$, given by \eqref{eq_matrix} and equipped with the domain 
\begin{align*}
\mathcal{D}(D)=&H_t(d,\Lambda^0 T^* M)\times [H_t(d, \Lambda^1 T^* M)\cap H(\delta_{\mu,\epsilon},\Lambda^1 T^* M)]\\
&\times
 [H_t(d, \Lambda^2 T^* M)\cap H(\delta_{\epsilon,\mu},\Lambda^2 T^* M)]
 \times H(\delta_{\mu,\epsilon},\Lambda^3 T^* M),
\end{align*}
is skew-adjoint on $L^2$. 

\item[(ii)] The spectrum of the operator  $D$ with the domain $\mathcal{D}(D)$ is  discrete.

\item[(iii)] The operator $D$ is an elliptic differential operator in the interior of $M$. 

\end{itemize}

\end{prop}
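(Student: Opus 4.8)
The plan is to treat the three parts in order, building on Lemma~\ref{lem_adjoint} for part (i), deducing (ii) from (i) together with ellipticity and compactness of the resolvent, and verifying (iii) by a direct symbol computation.

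For \textbf{(i)}, I would first observe that the Stokes formula \eqref{eq_stokes_D} shows that $D$ is skew-symmetric on $\mathcal{D}(D)$: indeed, if $\omega,\eta\in\mathcal{D}(D)$, then all the tangential traces $\mathbf{t}\omega^0,\mathbf{t}\omega^1,\mathbf{t}\omega^2$ and $\mathbf{t}\eta^0,\mathbf{t}\eta^1,\mathbf{t}\eta^2$ vanish by definition of $H_t(d,\cdot)$, so the right-hand side of \eqref{eq_stokes_D} is zero and $(D\omega,\eta)_{L^2}=-(\omega,D\eta)_{L^2}$. To upgrade skew-symmetry to skew-adjointness, I would identify $D$ with the off-diagonal operator built from $d$ and $\delta$ and use Lemma~\ref{lem_adjoint} (and its stated analogues for forms of higher degree) to compute $D^*$ explicitly. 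Concretely, writing $D$ in the block form \eqref{eq_matrix}, the adjoint of the lower-triangular block $d:L^2(\Lambda^{k-1})\to L^2(\Lambda^k)$ with domain $H_t(d,\Lambda^{k-1})$ is $\delta$ with domain $H(\delta,\Lambda^k)$, and conversely; assembling these block adjoints and keeping track of the signs in \eqref{eq_matrix} shows that $D^*$ has exactly the matrix $-D$ (equivalently, that $iD$ is self-adjoint) together with precisely the domain $\mathcal{D}(D)$. The main thing to check carefully is that the domain of $D^*$ is not larger than $\mathcal{D}(D)$: this is where one needs the surjectivity in Lemma~1.2 (so that the trace conditions defining $H_t$ are the ``maximal'' constraints compatible with vanishing boundary term), together with the fact that the intersections $H_t(d,\Lambda^k)\cap H(\delta,\Lambda^k)$ are exactly the natural domains making the mixed $d$--$\delta$ system self-dual. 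I expect \emph{this} closure/domain-matching step to be the main obstacle, since it requires the density and trace results quoted from \cite{Paquet, KurLasSom06} to be used at full strength rather than formally.

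For \textbf{(ii)}, once $iD$ is self-adjoint it suffices to show that $D$ has compact resolvent, which is equivalent to showing that the domain $\mathcal{D}(D)$ embeds compactly into $L^2(M,\Lambda T^*M)$. I would argue this via a Gaffney-type inequality: on the graph norm $\|\omega\|_{L^2}^2+\|D\omega\|_{L^2}^2$ one controls, componentwise, $\|d\omega^k\|_{L^2}^2+\|\delta\omega^k\|_{L^2}^2$, and the standard elliptic estimate for the Hodge-type system with the tangential (relative) boundary conditions encoded in $H_t$ gives $\|\omega^k\|_{H^1(M)}\le C(\|\omega^k\|_{L^2}+\|d\omega^k\|_{L^2}+\|\delta\omega^k\|_{L^2})$ for the components carrying a $\mathbf{t}$-condition, and similarly with the normal condition for $\omega^3$. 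Hence $\mathcal{D}(D)\hookrightarrow H^1(M,\Lambda T^*M)\hookrightarrow L^2(M,\Lambda T^*M)$ compactly by Rellich, so the resolvent of $D$ is compact and its spectrum is discrete. (One must be mildly careful that $\epsilon,\mu$ only change the metric inside $M$, so the relevant Hodge Laplacians $\delta d+d\delta$ are genuinely elliptic with smooth coefficients and the boundary conditions are the classical relative/absolute ones in a suitable metric; this is exactly the setting covered by the cited references.)

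For \textbf{(iii)}, I would compute the principal symbol of $D$ at a point $x$ in the interior for a covector $\xi$. Since $d$ has principal symbol $\sigma(d)(x,\xi)=i\,\xi\wedge(\cdot)$ and $\delta$ has principal symbol $\sigma(\delta)(x,\xi)= i\,\iota_{\xi^\sharp}(\cdot)$ (contraction by the metric dual of $\xi$, up to the usual sign), the principal symbol of $D$ on $\Lambda T^*M=\bigoplus_k\Lambda^k$ is, up to the factor $i$, the operator $e_\xi+\iota_\xi$ where $e_\xi=\xi\wedge(\cdot)$ and $\iota_\xi$ is its adjoint. The Clifford relation $(e_\xi+\iota_\xi)^2=|\xi|^2\,\mathrm{Id}$ then shows $\sigma(D)(x,\xi)^2=-|\xi|^2\,\mathrm{Id}$, so $\sigma(D)(x,\xi)$ is invertible for $\xi\neq0$; this is precisely the statement that $D$ is an elliptic (indeed Dirac-type) first-order differential operator in the interior of $M$. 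No boundary analysis enters here, so this part is routine once the symbol identities are recorded.
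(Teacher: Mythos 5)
Your parts (i) and (ii) follow the paper's own route: skew-symmetry from the Stokes identity \eqref{eq_stokes_D} plus the block-by-block identification of $D^*$ via Lemma \ref{lem_adjoint} to get $\mathcal{D}(D^*)=\mathcal{D}(D)$, and then discreteness of the spectrum from Gaffney's inequality, which gives $H_t(d,\Lambda^k T^*M)\cap H(\delta,\Lambda^k T^*M)\subset H^1$ for $k=1,2$ and hence a compact embedding $\mathcal{D}(D)\hookrightarrow L^2$ by Rellich. (Your remark about a ``normal condition for $\omega^3$'' is unnecessary: the components $\omega^0$ and $\omega^3$ carry no boundary condition in $\mathcal{D}(D)$, and none is needed since $d\omega^0\in L^2$ and $\delta\omega^3\in L^2$ already give $H^1$ regularity for scalar, respectively top-degree, forms.) You are also right that the domain-matching $\mathcal{D}(D^*)=\mathcal{D}(D)$ is the only step with real content in (i); the paper disposes of it by appeal to Lemma \ref{lem_adjoint}, exactly as you propose.

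For (iii) you genuinely diverge from the paper: you compute the principal symbol of the first-order operator $D$ directly and invoke a Clifford identity, whereas the paper proves ellipticity of the second-order operator $D^2$ via the coercivity estimate \eqref{eq_coer} (itself a consequence of $((d\delta+\delta d)\omega,\omega)=\|\delta\omega\|^2+\|d\omega\|^2$ and Gaffney), citing \cite{Melin1971}. Your route is more direct and avoids the detour through \eqref{eq_coer}, but one detail needs care: because $\delta_{\epsilon,\mu}$ and $\delta_{\mu,\epsilon}$ are built from \emph{two different} Hodge stars $*_\epsilon$ and $*_\mu$, the symbol $\sigma(\delta)(x,\xi)$ is the adjoint of $i\,\xi\wedge(\cdot)$ with respect to degree-dependent fibre inner products (those induced by $L^2_\epsilon$ and $L^2_\mu$), not contraction $\iota_{\xi^\sharp}$ for a single metric. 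Consequently the identity $\sigma(D)(x,\xi)^2=-|\xi|^2\,\mathrm{Id}$ does not hold literally; what survives is $\sigma(D)(x,\xi)^2=-(e_\xi e_\xi^{*}+e_\xi^{*}e_\xi)$, which is negative definite for $\xi\neq0$ because $\ker e_\xi\cap\ker e_\xi^{*}=\operatorname{im}e_\xi\cap(\operatorname{im}e_\xi)^{\perp}=0$ by exactness of the Koszul complex. With that correction your symbol argument is complete and gives the same conclusion as the paper's coercivity argument.
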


\begin{proof}

(i). Using the definition of the domain of the adjoint and  Lemma \ref{lem_adjoint}, we obtain that  $\mathcal{D}(D^ *)=\mathcal{D}(D)$. 
The skew-adjointness of $D$ then  follows from \eqref{eq_stokes_D}, which holds for $\omega,\eta\in \mathcal{D}(D)$. 

(ii). In view of Gaffney's inequality 
\cite[Corollary 2.1.6]{Sch95}, 
\[
H_t(d, \Lambda^k T^* M)\cap H(\delta,\Lambda^k T^* M)=\{
\omega^k\in H^1(M,\Lambda^k T^* M):\mathbf{t}\omega^k=0
\}, k=1,2,
\]
together with the Sobolev embedding, we conclude that the imbedding $\mathcal{D}(D) \hookrightarrow L^2$ is compact.  Hence,  the spectrum of  $\mathcal{D}$ is  discrete. 

(iii).  It suffices to show the ellipticity of $D^2$. Since $\delta_{\mu,\epsilon}\delta_{\epsilon,\mu}=0$ and
$\delta_{\epsilon,\mu}\delta_{\mu,\epsilon}=0$, we get
\[
D^2=\begin{pmatrix} -\delta_{\mu,\epsilon}d &0 & 0& 0\\
0& -d\delta_{\mu,\epsilon}-\delta_{\epsilon,\mu}d& 0 & 0\\
0& 0& -d\delta_{\epsilon,\mu}-\delta_{\mu,\epsilon}d & 0\\
0& 0 & 0& -d\delta_{\mu,\epsilon}
\end{pmatrix}.
\]
The operator $D^2$ enjoys the following coercive estimate, 
\begin{equation}
\label{eq_coer}
(D^2\omega,\omega)_{L^2(\Omega M)}\ge C_1\|\omega\|_{H^1(\Omega M)}^2-C_2\|\omega\|_{L^2(\Omega M)}^2,  C_1>0, 
\end{equation}
where $\omega=(\omega^0,\omega^1,\omega^2,\omega^3)$ and $\omega^k\in C^\infty_0(M,\Lambda^k T^* M)$, $k=0,1,2,3$. 

When proving \eqref{eq_coer}, notice that, for $\omega^1\in C^\infty_0(M,\Lambda^1 T^* M)$,
\[
((d\delta_{\mu,\epsilon}+\delta_{\epsilon,\mu}d)\omega^1,\omega^1)_{L^2}=\|\delta_{\mu,\epsilon}\omega^1\|_{L^2}^2+\|d \omega^1\|_{L^2}^2.
\]
An application of Gaffney's inequality 
 gives that 
\[
\|\omega^1\|_{H^1}\le C(M)(\|\omega^1\|_{L^2}+\|d\omega^1\|_{L^2}+\|\delta_{\mu,\epsilon}\omega^1\|_{L^2})
\]
where $C(M)>0$ is a constant. 
The estimate  \eqref{eq_coer} follows, since the treatment of forms of degrees different from $1$ is analogous. See also \cite{Cos1991} for a different proof of coercivity. The ellipticity of $D^2$ now follows from the coercivity estimate  \eqref{eq_coer}, see e.g. \cite{Melin1971}.

\end{proof}

\subsection{Betti numbers and the Euler characteristic of a manifold with boundary}
Let $(M,g_0)$ be an orientable compact Riemannian  manifold of dimension $3$ with boundary. 
The space
\[
\mathcal{H}^k(M)=\{\omega\in L^2(M, \Lambda^k T^* M):d\omega=0, d*_{g_0}\omega=0\}
\]
is called the \emph{space of harmonic fields}. Notice that  this space is infinite dimensional for $1\le k\le 2$, see \cite[Theorem 3.4.2]{Sch95}. 
Moreover, 
it is well-known that harmonic fields are $C^\infty$-smooth in the interior of $M$.
The following two finite dimensional subspaces are distinguished in $\mathcal{H}^k(M)$:
\begin{align*}
\mathcal{H}^k_D(M)=\{\omega\in\mathcal{H}^k(M):\mathbf{t}\omega=0\}\quad\text{and}\\
\mathcal{H}_N^k(M)=\{\omega\in\mathcal{H}^k(M):i^*(\ast_{g_0}\omega)=0\},
\end{align*}
which are called the \emph{Dirichlet} and \emph{Neumann harmonic fileds},  respectively. 
It follows from the Hodge theory that the dimensions of the spaces $\mathcal{H}^k_D(M)$ and $\mathcal{H}^k_N(M)$ are independent of the choice of the metric $g_0$. 
For our purposes, we shall have to specify the choice of the Hodge star operator in the definition of $\mathcal{H}^k(M)$, according to the definition of the codifferential given in \eqref{eq_codiff},
\begin{align*}
\mathcal{H}^2(M)&=\{\omega\in L^2(M, \Lambda^2 T^* M):d\omega=0, d*_{\mu}\omega=0\},\\
\mathcal{H}^k(M)&=\{\omega\in L^2(M, \Lambda^k T^* M):d\omega=0, d*_{\epsilon}\omega=0\},k=1,3.
\end{align*}

Recall \cite{Fran_book} that the space $\mathcal{H}_N^k( M)$ is isomorphic to 
the $k$th homology group of the manifold $H_k(M;\R)$ and   $\mathcal{H}_D^k(M)$ is isomorphic to the $k$th relative homology group $H_k(M,\p M;\R)$. 
The  Poincar\'e-Lefschetz duality states  the existence of the following isomorphism,
\[
H_k(M;\R)\simeq H_{3-k}(M,\p M;\R), \quad k=0,1,2,3.
\]
The $k$th absolute Betti number of the manifold $M$  is given by
\[
\beta_k(M)=\dim \mathcal{H}_N^k(M), \quad k=0,1,2,3,
\]
and the $k$th relative Betti number of $M$ is defined by
\[
\beta_k(M,\p M)=\dim\mathcal{H}_D^k(M), \quad k=0,1,2,3.
\]
Being one of the simplest topological invariants, the Betti numbers carry a basic amount of information about the topology of a manifold in question.
The Betti numbers $\beta_0(M)$ and $\beta_3(M)$ admit a particularly  straightforward geometric interpretation. Namely, 
$\beta_0(M)$ counts the number of the connected components of $M$ and  $\beta_3(M)$ gives the number of the oriented components of $M$ without boundary.  Assuming that the manifold $M$ is connected, we have  $\beta_0(M)=1$ and $\beta_3(M)=0$.
As for the  first Betti number $\beta_1(M)$, it is at least as large as the total number of handles of $\p M$, see
\cite[Theorem 5.1.9]{ColGrigKur}.

The Euler characteristic is defined by
\[
\chi(M)=\beta_{3}(M)-\beta_2(M)+\beta_1(M)-\beta_0(M).
\]
It is known \cite[Corollary 8.8]{Dold_book} that the  Euler characteristics of a compact 3-manifold and its boundary are related by 
 \begin{equation}
 \label{eq_euler_bound}
 \chi(\pM)=2\chi(M).
\end{equation}

Notice finally that if $M$ is a connected compact orientable 3-manifold with vanishing Euler characteristic, then  either the manifold $M$ is closed or
its boundary is a disjoint union of tori.

\subsection{Boundary data for inverse problems}

Let $\Gamma\subset\partial M$ be an open subset of the boundary $\p M$. Consider the following  initial boundary value problem,
\begin{equation}
\label{eq_hyperbolic}
\begin{aligned}
&(\partial_t+D)\omega(x,t)=0\quad\text{in}\quad
M\times\mathbb{R},\\
&\mathbf{t}\omega|_{\partial M\times\mathbb{R}}=f\in
C_0^\infty( \R_-,C^\infty_0(\Gamma, \Lambda T^*M)),\\
&\omega|_{t=-\tau_f}=0,
\end{aligned}
\end{equation}
where  $\tau_f>0$ is such that $\inf \supp (f)>-\tau_f$.
Following \cite{KurLasSom06}, we shall define a solution of  \eqref{eq_hyperbolic} in the following way. Let $E$ be a right inverse to the trace mapping $\mathbf{t}$ such that $Ef(-\tau_f)=0$. We set
\begin{equation}
\label{eq_semi_1}
\omega^f(t)=Ef(t)-\int_{-\tau_f}^t e^{-(t-s)D}(\p_s +D)Ef(s)ds. 
\end{equation}
Here $e^{-t D}$ is the unitary group, generated by the self-adjoint operator $D/i$. 

Associated to the problem \eqref{eq_hyperbolic} is the response operator,
\[
R_{\Gamma}:f\mapsto
\mathbf{n}\omega^f|_{\Gamma\times\R_-}.
\]
The first main result of this work  is the following theorem.

\begin{thm}
\label{thm_main}

Assume that we are  given an open subset $\Gamma\subset \p M$   and  the response operator $R_{\Gamma}$ for any $f\in
C_0^\infty( \R_-,C^\infty_0(\Gamma, \Lambda T^*M))$.  These data determine  the Betti numbers of the manifold $M$.

\end{thm}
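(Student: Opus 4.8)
The plan is to reconstruct the Betti numbers from the response operator $R_\Gamma$ via a standard Boundary Control (BC) strategy adapted to the first-order Dirac system $D$, exploiting the fact that $D$ is self-adjoint modulo $i$ and that the harmonic fields $\cH^k_N(M)$, $\cH^k_D(M)$ compute the absolute and relative Betti numbers. First I would use the response operator to recover boundary bilinear forms. Concretely, by the Stokes-type identity \eqref{eq_stokes_D} and Blagoveshchenskii-type integration by parts in time, the operator $R_\Gamma$ determines the inner products $(\omega^f(0),\omega^h(0))_{L^2}$ for all controls $f,h$ supported in $\R_-\times\Gamma$ — this is the usual observation that the response operator plus the symmetry of the generator fixes all such overlaps. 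The key static object is then the subspace $\cX_\Gamma = \overline{\{\omega^f(0): f\in C_0^\infty(\R_-, C_0^\infty(\Gamma,\Lambda T^*M))\}}\subset L^2$, whose Gram matrix is known.

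The second step is to show that $\cX_\Gamma$, or rather the way $L^2$ decomposes relative to it, sees the harmonic fields. Here one uses the Hodge--Morrey--Friedrichs decomposition of $L^2(M,\Lambda^k T^*M)$ (with the metrics $g_\epsilon$, $g_\mu$ dictated by \eqref{eq_codiff}) into $dH_t \oplus \delta H_n \oplus \cH^k$, together with the known fact (cited in the excerpt) that $\cH^k_N(M)\cong H_k(M;\R)$ and $\cH^k_D(M)\cong H_k(M,\p M;\R)$, and Poincar\'e--Lefschetz duality. The controllability input — approximate controllability from $\Gamma$ into the ``non-harmonic'' part and the identification of the defect — is what pins down the finite numbers $\beta_k$. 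Because the paper explicitly flags that Tataru-type unique continuation is \emph{not} available for the general anisotropic system, exact/approximate controllability in the classical strong form cannot be invoked; instead I would argue that even the weak reachable set $\cX_\Gamma$ contains (or is complemented by) the harmonic fields in a computable way, e.g. via the ellipticity of $D$ (part (iii) of the Proposition) and the discreteness of its spectrum (part (ii)), using a Duhamel/eigenfunction-expansion argument: $\omega^f(0)$ is a superposition of eigenforms of $D$, the eigenforms orthogonal to all reachable states must have vanishing trace on $\Gamma$, and unique continuation for the \emph{elliptic} operator $D$ (which \emph{is} available, unlike the hyperbolic Tataru statement) forces such eigenforms to be harmonic fields, i.e. $D$-eigenforms with eigenvalue $0$ vanishing near $\Gamma$. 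Counting these, split by form degree and by the Dirichlet/Neumann boundary behavior read off from $R_\Gamma$, yields $\beta_k(M)$ and $\beta_k(M,\p M)$.

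Thus the scheme is: (1) from $R_\Gamma$ recover the Gram matrix of $\{\omega^f(0)\}$ via time integration and \eqref{eq_stokes_D}; (2) identify the orthogonal complement of the reachable set inside each degree-$k$ component of $L^2$ as consisting of harmonic fields, using elliptic unique continuation for $D$ plus the spectral discreteness; (3) separate, within this complement, the Dirichlet harmonic fields from the Neumann ones using the tangential versus normal boundary traces encoded in the response operator together with Gaffney's inequality and the Hodge decomposition; (4) invoke $\beta_k(M)=\dim\cH^k_N(M)$, $\beta_k(M,\p M)=\dim\cH^k_D(M)$, and Poincar\'e--Lefschetz duality \eqref{eq_euler_bound}, to output all Betti numbers.

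I expect the main obstacle to be precisely step (2): establishing that the orthogonal complement of the reachable set $\cX_\Gamma$ is \emph{exactly} the space of harmonic fields, with nothing extra. One direction — harmonic fields (suitably boundary-normalized) are orthogonal to all $\omega^f(0)$ — should follow from \eqref{eq_stokes_D} and the vanishing of $d$, $\delta$ on harmonic fields together with $\mathbf{t}f$ supported away from $\partial M\setminus\Gamma$; but is not completely immediate because controls live only on $\Gamma$, not all of $\p M$, so one must handle the cross terms on $\p M\setminus\Gamma$. The reverse inclusion — any $L^2$ form orthogonal to the whole reachable set is harmonic — is where the absence of hyperbolic unique continuation bites: one must route the argument through the elliptic operator $D$ and its resolvent rather than through wave propagation, showing that such a form is annihilated by $D$ and has trace zero on $\Gamma$, hence (by elliptic unique continuation, which holds) is a harmonic field. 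Making this rigorous, and checking that the degree-by-degree and Dirichlet/Neumann bookkeeping is faithfully transmitted by $R_\Gamma$ alone, is the technical heart of the proof.
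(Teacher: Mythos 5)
Your step (1) --- using a Blagovestchenskii-type identity to recover the inner products $(\omega^{f,k}(t),\omega^{h,k}(s))_{L^2}$ from $R_\Gamma$ --- is exactly the paper's starting point, and your instinct that elliptic unique continuation for $D$ must replace the unavailable Tataru-type theorem is also correct. But your central step (2) is inverted, and this is a genuine gap. You propose to show that the orthogonal complement of the reachable set $\mathcal{X}_\Gamma$ consists of harmonic fields, and you assert that ``one direction --- harmonic fields are orthogonal to all $\omega^f(0)$ --- should follow from \eqref{eq_stokes_D}''. That direction is false, and the theorem depends precisely on its failure: what is actually proved is that \emph{no nonzero Dirichlet harmonic field is orthogonal to the reachable set}, i.e.\ that $\Pi(\{\omega^f(0)\})=\mathcal{H}_D(M)$, where $\Pi$ is the orthogonal projection onto $\mathcal{H}_D(M)=\ker D$. (The argument is the duality computation you have in mind, but applied to a \emph{static} dual solution $u\equiv\eta\in\mathcal{H}_D(M)$: orthogonality to all $\omega^f(0)$ forces $\mathbf{n}\eta=0$ on $\Gamma$; since also $\mathbf{t}\eta=0$ and $D^2\eta=0$ is a second-order elliptic system with diagonal principal part, unique continuation from the vanishing Cauchy data on $\Gamma$ gives $\eta=0$.) If the harmonic fields were orthogonal to $\mathcal{X}_\Gamma$, their dimensions would be invisible to your Gram matrix. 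Note also that $\mathcal{H}^k(M)$ without boundary conditions is infinite-dimensional for $k=1,2$, so ``the complement is the harmonic fields'' cannot output finite Betti numbers as stated; and your step (3) of separating Dirichlet from Neumann fields is unnecessary, since $\dim\mathcal{H}_D^k(M)=\beta_k(M,\partial M)=\beta_{3-k}(M)$ already yields all Betti numbers by Poincar\'e--Lefschetz duality.

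The second missing ingredient is the device that turns this surjectivity of $\Pi$ into something computable from $R_\Gamma$: the ergodic-mean formula $\Pi=\lim_{T\to\infty}T^{-1}\int_0^Te^{-tD}\,dt$, valid because $D$ is skew-adjoint with discrete spectrum and $\ker D=\mathcal{H}_D(M)$. Combined with $e^{-tD}\omega^f(0)=\omega^{f_t}(0)$, $f_t=f(\cdot+t)$, it expresses $(\Pi^k\omega^{f,k}(0),\omega^{h,k}(0))_{L^2}$ as a time average of the inner products from your step (1). Gram--Schmidt applied to these computable quantities then counts $\dim\mathcal{H}_D^k(M)$ for each $k$, which is what the theorem requires. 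Without this (or an equivalent way of isolating the projection onto $\ker D$ from the data), your scheme does not close.
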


Let us now return to  the physical Maxwell's equations
\begin{equation}
\label{eq_Maxwell_phys}
\begin{aligned}
&\omega^1_t=\delta_{\epsilon,\mu}\omega^2,\quad \delta_{\mu,\epsilon}\omega^1=0,\\
&\omega^2_t=-d\omega^1,\quad d\omega^2=0,\\
&\mathbf{t}\omega^1=h\in C_0^\infty( \R_-,C^\infty_0(\Gamma, \Lambda^1 T^*M)),\\
&\omega|_{t<-\tau_h}=0.
\end{aligned}
\end{equation}
As explained in \cite{KurLasSom06}, the solution to \eqref{eq_Maxwell_phys} is obtained from
\eqref{eq_semi_1} by choosing the boundary source $f$ in \eqref{eq_hyperbolic}
as 
\[
f=(0,h,-\int_{-\tau_h}^t dh(t')dt'). 
\]

The response operator for \eqref{eq_Maxwell_phys} is defined by 
\[
\tilde R_\Gamma:h\mapsto
\mathbf{n}\omega^{h,2}|_{\Gamma\times \R_-},
\]
where $\omega^h$ is the solution to \eqref{eq_Maxwell_phys}.
Notice that in the classical terminology of electric and magnetic fields, the response operator $\tilde R_\Gamma$ maps the tangential component of the electric field $n\times E|_{\Gamma\times\R_-}$ to the tangential component of the  magnetic field $n\times H|_{\Gamma\times \R_-}$.

\begin{thm}
\label{thm_main_max}

Given an open subset $\Gamma\subset\p M$ and  the response operator $\tilde R_\Gamma$ for any $h\in
 C_0^\infty( \R_-,C^\infty_0(\Gamma, \Lambda^1 T^*M))$, the first absolute Betti number $\beta_1(M)$ of the manifold $M$ can be determined.

\end{thm}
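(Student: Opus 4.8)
The plan is to mimic the boundary-control argument behind Theorem~\ref{thm_main}, keeping careful track of which information survives when we may only use the restricted sources $f=(0,h,-\int_{-\tau_h}^{t}dh\,dt')$ and only observe the $2$-form part $\mathbf n\omega^{h,2}|_\Gamma$. The target invariant is $\beta_1(M)=\dim\mathcal H^2_D(M)$ (Poincar\'e--Lefschetz duality), and I record that each $\phi^2\in\mathcal H^2_D(M)$ produces a static solution $(0,0,\phi^2,0)\in\ker D$ of the complete system. The structural fact that makes everything work, already observed in Section~2, is that for the physical source the complete-system solution has $\omega^{h,0}\equiv\omega^{h,3}\equiv0$; hence $\mathbf n\omega^{h,3}\equiv0$ and the $0$-form slot of $\mathbf t\omega^{h}$ vanishes, which is exactly what forces the unobserved normal-trace components out of all the identities below.

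First I would establish a Blagovestchenskii-type identity. For admissible $h,h'$ set $w(t,s)=(\omega^{h}(t),\omega^{h'}(s))_{L^2}$. Using $\partial_t\omega^{h}=-D\omega^{h}$, the Green formula \eqref{eq_stokes_D}, $\mathbf t\omega^{h}=f=(0,h,-\int dh)$ and the vanishing of $\omega^{h,0},\omega^{h,3}$, all the boundary terms collapse to observed quantities and one obtains
\[
(\partial_t+\partial_s)\,w(t,s)=-\langle h(t),(\tilde R_\Gamma h')(s)\rangle-\langle h'(s),(\tilde R_\Gamma h)(t)\rangle .
\]
Since $w$ vanishes as soon as $t$ or $s$ drops below the relevant $-\tau$, integration along the characteristics $t-s=\mathrm{const}$ recovers $w$; to have the right-hand side available for all times I would first extend the knowledge of $\tilde R_\Gamma h$ from $\Gamma\times\R_-$ to $\Gamma\times\R$ by translating sources in time, $h\rightsquigarrow h(\cdot+c)$, $c>0$. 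Thus $\tilde R_\Gamma$ determines $(\omega^{h}(t),\omega^{h'}(s))_{L^2}$ for all $t,s$ and all admissible $h,h'$.

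The next step extracts $\beta_1$. Since $D/i$ is self-adjoint with discrete spectrum, the mean ergodic theorem gives $\frac1T\int_0^T e^{-tD}a\,dt\to P_{\ker D}a$ for every $a$. For $t\ge 0$ the source is off, so $\omega^{h}(t)=e^{-tD}\omega^{h}(0)$; these states keep $\omega^{0}\equiv\omega^{3}\equiv0$, and because the $0$-form slot of the source is zero one checks from \eqref{eq_stokes_D} that $(\omega^{h}(t),(0,\phi^1,0,0))_{L^2}\equiv0$ for every $\phi^1\in\mathcal H^1_D(M)$. Hence $P_{\ker D}\omega^{h}(0)$ lies in $B:=\{(0,0,\phi^2,0):\phi^2\in\mathcal H^2_D(M)\}$, which has dimension $\beta_1(M)$. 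Averaging the displayed identity in both variables, $\tilde R_\Gamma$ determines the Gram matrix $\big[(P_{\ker D}\omega^{h}(0),P_{\ker D}\omega^{h'}(0))_{L^2}\big]$, hence the number $\dim\operatorname{span}\{P_{\ker D}\omega^{h}(0):h\}\le\beta_1(M)$.

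It then remains to show this span exhausts $B$. If $\psi=(0,0,\psi^2,0)\in B$ is orthogonal to all the projected states, then, being in $\ker D$ and hence orthogonal to the closed range of $D$, it is in fact orthogonal to every $\omega^{h}(0)$; feeding this back through \eqref{eq_stokes_D} forces $\langle h(t),\mathbf n\psi^2|_\Gamma\rangle\equiv0$ for all $h$, i.e. $\mathbf n\psi^2|_\Gamma=0$, so the harmonic field $\psi^2$ has vanishing Cauchy data on the open set $\Gamma$. \textbf{The pivotal point, and the reason the argument succeeds while the full inverse problem is open, is this last step:} it only uses unique continuation for the \emph{elliptic} operator $D^2$ (Aronszajn's theorem, applicable since $D^2$ is elliptic in the interior), not the Tataru-type hyperbolic unique continuation that is unavailable for the general anisotropic system; correspondingly one never needs genuine wave controllability, only controllability into the static subspace $\ker D$, which is reached by time-averaging. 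From $\psi^2\equiv0$ one concludes $\dim\operatorname{span}\{P_{\ker D}\omega^{h}(0):h\}=\dim B=\beta_1(M)$, so $\beta_1(M)$ is read off from $\tilde R_\Gamma$. The same structural reason --- that the physical source sits in the $1$-form slot and is observed in the $2$-form slot --- explains why only $\beta_1(M)\cong\dim\mathcal H^2_D(M)$ is obtained, the $0$- and $3$-form sectors (and with them $\beta_0,\beta_3$) remaining invisible to $\tilde R_\Gamma$.
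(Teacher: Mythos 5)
Your proposal is correct and follows essentially the same route as the paper: a Blagovestchenskii-type identity to recover the inner products $(\omega^{h}(t),\omega^{h'}(s))_{L^2}$ from $\tilde R_\Gamma$, time-averaging (the mean ergodic theorem for $e^{-tD}$) to project onto $\ker D$, elliptic unique continuation from $\mathbf{n}\eta^2|_\Gamma=0$, $\mathbf{t}\eta^2|_{\partial M}=0$ to get surjectivity of the projected states onto $\mathcal{H}^2_D(M)$, and the rank of the resulting Gram matrix to read off $\beta_1(M)=\dim\mathcal{H}^2_D(M)$. The only deviations are cosmetic: you derive a first-order transport identity for $w(t,s)$ instead of the paper's second-order wave-equation version, and you verify explicitly (via the vanishing of the $0$- and $3$-form slots and the orthogonality to $\mathcal{H}^1_D(M)$) that $P_{\ker D}\omega^{h}(0)$ lands in the degree-$2$ harmonic Dirichlet fields, a point the paper handles by working directly with $\Pi^2$ on the physical system.
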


\begin{cor}
\label{cor_main} The knowledge of the boundary $\p M$ and the response operator $\tilde R_\Gamma$, $\Gamma\subset \p M$,  
 for any $h\in
 C_0^\infty( \R_-,C^\infty_0(\Gamma, \Lambda^1 T^*M))$, determines the first  and  the second  absolute Betti numbers  $\beta_1(M)$ and  $\beta_2(M)$ of $M$.
\end{cor}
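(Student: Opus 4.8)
The plan is to obtain $\beta_2(M)$ as a purely formal consequence of Theorem~\ref{thm_main_max} together with the classical relation \eqref{eq_euler_bound} between the Euler characteristics of $M$ and of $\partial M$. By Theorem~\ref{thm_main_max}, the response operator $\tilde R_\Gamma$ already determines the first absolute Betti number $\beta_1(M)$; so the only thing left is to recover $\beta_2(M)$, and for this the additional datum — the boundary $\partial M$ — will be used.

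First I would observe that knowing $\partial M$ means, in particular, knowing the compact orientable surface $\partial M$ up to homeomorphism (indeed up to homotopy type suffices), and hence knowing its Euler characteristic $\chi(\partial M)$. By \eqref{eq_euler_bound} this yields $\chi(M)=\tfrac12\chi(\partial M)$, so $\chi(M)$ is determined.

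Next I would invoke the standing hypotheses that $M$ is connected with nonempty boundary, which force $\beta_0(M)=1$ and $\beta_3(M)=0$, as recorded in \S2.3. Substituting into the definition $\chi(M)=\beta_3(M)-\beta_2(M)+\beta_1(M)-\beta_0(M)$ and solving for $\beta_2(M)$ gives
\[
\beta_2(M)=\beta_1(M)-1-\chi(M)=\beta_1(M)-1-\tfrac12\chi(\partial M).
\]
Every quantity on the right-hand side is known — $\beta_1(M)$ from Theorem~\ref{thm_main_max}, and $\chi(\partial M)$ from the prescribed boundary — so $\beta_2(M)$ is determined, which is the claim.

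As for the main difficulty: there is essentially none beyond Theorem~\ref{thm_main_max} itself, since the corollary is a bookkeeping consequence of that theorem, the topological identity \eqref{eq_euler_bound}, and the connectedness assumption. The only point meriting a word of care is the precise meaning of ``the knowledge of $\partial M$'': it should be read at least at the level of homotopy type, so that $\chi(\partial M)$ is genuinely available; with that reading the argument above is immediate, and no controllability or unique continuation input is needed for this last step.
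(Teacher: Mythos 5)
Your proposal is correct and follows exactly the route the paper intends: the paper's one-line proof ("Corollary \ref{cor_main} follows from Theorem \ref{thm_main_max} together with \eqref{eq_euler_bound}") is precisely your argument, with $\beta_0(M)=1$, $\beta_3(M)=0$, and $\beta_2(M)=\beta_1(M)-1-\tfrac12\chi(\partial M)$. Your added remark on what ``knowledge of $\partial M$'' must mean is a reasonable clarification but does not change the argument.
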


Corollary \ref{cor_main} follows from Theorem \ref{thm_main_max} together with  \eqref{eq_euler_bound}.

\subsection{Examples}

The following two examples illustrate 
 the significance of our results for the determination of the topological structure of an unknown object from the boundary measurements. 
This may have applications to practical situations, where the structure of complicated voids in an unknown object is to be recovered.

\begin{exmp}

Let $M\subset \R^3$ be obtained from a large ball  by removing a finite number of pairwise disjoint solid tori. Then the first absolute Betti number of $M$ is equal to the number of the removed solid tori. Thus, measuring the response operator on a portion of the boundary sphere, we can recover the total number of the removed tori. 

\end{exmp}

\begin{exmp}
Consider a solid torus $ST=S^1\times D^2\subset \R^3$,  where $S^1$ is a unit circle and $D^2$ is a closed two-dimensional disc. The boundary of $ST$ is a two dimensional torus and since $D^2$ is contractible, it follows that the first absolute Betti number of  $ST$ is equal to $1$. 
Let $M$ be the connected sum of $k$ copies of solid tori $ST$. Here we may recall that a connected sum of two manifolds, possibly with boundary, is a manifold formed by deleting a ball in the interior of each of the manifolds and gluing together the resulting boundary spheres.
The boundary of  $M$ is a disjoint union of $k$ copies of two-dimensional tori. It is known that for manifolds of dimension three and higher,  the first absolute Betti number of the connected sum is the sum of the first absolute Betti numbers of the summands. Therefore, the first absolute Betti number of $M$ is equal to $k$. 
It follows from our results that performing measurements on a portion of the boundary of the manifold $M$, we are able to recover the total number of the solid tori.

\end{exmp}

\section{Proof of Theorem \ref{thm_main}}

\subsection{Inner products}

 Let 
$\omega^f(t)=\omega^f(x,t)$ be the  solution to \eqref{eq_hyperbolic}. 
We shall need the following Blagovestchenskii type result, see \cite{Blag69} for such results for one-dimensional inverse problems.

\begin{thm}
\label{thm_blagov} For any 
$f,h\in C_0^\infty( \R_-,C^\infty_0(\Gamma, \Lambda T^*M))$, the
knowledge of $\Gamma\subset \p M$ and the response operator $R_\Gamma$ allows us to evaluate the
inner products
\begin{equation}
\label{eq_blagov} (\omega^{f,k}(t),\omega^{h,k}(s))_{L^2}, \quad k=0,1,2,3,\quad
\text{for} \quad  s,t\ge 0.
\end{equation}

\end{thm}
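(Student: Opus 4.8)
The plan is to establish the Blagovestchenskii identity by a standard argument that expresses the time derivative of the inner products $(\omega^{f,k}(t),\omega^{h,k}(s))_{L^2}$ in terms of boundary data only, and then to integrate. First I would introduce the bilinear quantity
\[
J(t,s)=(\omega^f(t),\omega^h(s))_{L^2}=\sum_{k=0}^3 (\omega^{f,k}(t),\omega^{h,k}(s))_{L^2},
\]
differentiate under the integral sign, and use the evolution equation $\partial_t\omega^f=-D\omega^f$ together with the Stokes-type identity \eqref{eq_stokes_D}. This gives
\[
(\partial_t^2-\partial_s^2)J(t,s)=-(D\omega^f(t),D^{-1}\!\cdots)\ \text{— more precisely, } \langle\mathbf{t}\omega^f(t),\mathbf{n}\omega^h(s)\rangle+\langle\mathbf{t}\omega^h(s),\mathbf{n}\omega^f(t)\rangle,
\]
so that the wave-operator $(\partial_t^2-\partial_s^2)$ applied to $J$ is a known boundary pairing: $\mathbf{t}\omega^f=f$ and $\mathbf{t}\omega^h=h$ are the given sources, supported in $\Gamma$, while $\mathbf{n}\omega^f|_{\Gamma\times\R_-}=R_\Gamma f$ and $\mathbf{n}\omega^h|_{\Gamma\times\R_-}=R_\Gamma h$ are given by the response operator. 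Since the sources are supported in $\Gamma$, the pairing over $\partial M$ reduces to a pairing over $\Gamma$, which is computable.

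Next I would recover $J(t,s)$ from this second-order information. Viewing $(\partial_t^2-\partial_s^2)J=F(t,s)$ as a one-dimensional wave equation in the $(t,s)$-plane with known right-hand side $F$, I need initial/boundary conditions. These come from finite speed of propagation: since $\supp f\subset(-\tau_f,0)$ and $\omega^f|_{t=-\tau_f}=0$, for $t,s$ with $t+s$ sufficiently negative (or, after the change of variables to characteristic coordinates, in a suitable corner) one has $J(t,s)=0$; more simply $J(-\tau_f,s)=0$ and $\partial_t J(-\tau_f,s)=0$ because $\omega^f$ and its time derivative vanish at $t=-\tau_f$. Solving the wave equation with these Cauchy data by d'Alembert's formula then determines $J(t,s)$ for all $t,s\ge 0$. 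Finally, to extract the individual components $(\omega^{f,k}(t),\omega^{h,k}(s))_{L^2}$ from the total $J$, I would use the fact that the evolution $e^{-tD}$ does not mix degrees in a way that destroys recoverability — one can differentiate $J$ in $t$ and $s$ and use the block structure of $D$ in \eqref{eq_matrix}, or alternatively apply the construction to the shifted sources $f$ placed in a single degree $k$ and note that, since $\mathbf{t}$ and $\mathbf{n}$ respect the grading, the response operator $R_\Gamma$ restricted to sources in $C_0^\infty(\R_-,C_0^\infty(\Gamma,\Lambda^k T^*M))$ already isolates the relevant components. Because a source supported in degree $k$ produces $\omega^f$ with nonzero components in degrees $k-1,k,k+1$ only, and these propagate with the known group, iterating the identity over $k$ and using the known $R_\Gamma$ recovers each $(\omega^{f,k}(t),\omega^{h,k}(s))_{L^2}$ separately.

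The main obstacle I anticipate is the bookkeeping needed to pass from the total inner product $J(t,s)$ to the graded pieces, and to make sure that all boundary terms appearing after differentiation — in particular the ones coming from the non-self-adjoint-looking but genuinely skew-adjoint operator $D$ on $\mathcal{D}(D)$ — are indeed expressible through $f$, $h$, $R_\Gamma f$, $R_\Gamma h$ and nothing else. One must be careful that $\mathbf{t}\omega^f$ vanishes on $\partial M\setminus\Gamma$, so the Stokes pairing localizes to $\Gamma$, and that the regularity of $\omega^f$ given by \eqref{eq_semi_1} (with the group acting on $L^2$) suffices to justify the integrations by parts and the differentiation under the integral; a density/approximation argument, or working first with smooth $f,h$ and the fact that $E$ is a smoothing right inverse, handles this. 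The wave-equation-in-$(t,s)$ step is then routine via d'Alembert, and the finite-speed-of-propagation initial conditions are immediate from the last line of \eqref{eq_hyperbolic}.
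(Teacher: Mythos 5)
Your overall strategy --- turning the inner product into the solution of a one-dimensional wave/transport equation in $(t,s)$ whose right-hand side is expressible through $f$, $h$ and $R_\Gamma$, and then integrating from the corner where everything vanishes --- is the right one and is in the spirit of the paper's proof. Two remarks on the first part: the identity that actually follows from \eqref{eq_stokes_D} is first order, $(\partial_t+\partial_s)J(t,s)=-\langle\mathbf{t}\omega^f(t),\mathbf{n}\omega^h(s)\rangle-\langle\mathbf{t}\omega^h(s),\mathbf{n}\omega^f(t)\rangle$, not the second-order formula you wrote (which is off by one derivative); and to know $\mathbf{n}\omega^h(s)|_\Gamma$ for $s\ge 0$ you must observe, as the paper does, that $\omega^f(t)=\omega^{f_t}(-1)$ with $f_t=f(\cdot+t+1)$, so that $R_\Gamma$ in fact determines the map $f\mapsto\mathbf{n}\omega^f|_{\Gamma\times\R}$ and not only its restriction to $\R_-$. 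These points are fixable.

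The genuine gap is the passage from the total inner product $J(t,s)$ to the graded pieces $I^k(s,t)=(\omega^{f,k}(t),\omega^{h,k}(s))_{L^2}$, which is what the theorem asserts. Neither of your two suggestions works. The claim that a boundary source supported in a single degree $k$ produces a solution with nonzero components only in degrees $k-1,k,k+1$ is false: $D$ couples adjacent degrees and the group $e^{-tD}$ populates all degrees after arbitrarily short times (only for special sources, such as the physical Maxwell choice $f=(0,h,-\int dh\,dt')$, do some components vanish identically). Differentiating $J$ in $t$ and $s$ produces quantities like $(D\omega^f(t),D\omega^h(s))_{L^2}$ or $(D^2\omega^f(t),\omega^h(s))_{L^2}$, which do not decouple into the $I^k$ in any evident way. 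The paper sidesteps the issue by never forming $J$: since $D^2$ is block diagonal in the degree grading, each $\omega^{f,k}$ satisfies its own second-order wave equation, and one computes $(\partial_s^2-\partial_t^2)I^k$ directly as a sum of boundary pairings via \eqref{eq_stokes_formulae}. The price is that these pairings a priori involve the second-order Cauchy data $\mathbf{n}(d\omega^k)$ and $\mathbf{t}(\delta\omega^k)$; the key computational point, absent from your proposal, is that these are converted back into admissible data using $\mathbf{t}(d\omega^k)=d(\mathbf{t}\omega^k)$ --- whence $\mathbf{n}\delta_{\epsilon,\mu}\omega^2=d\mathbf{n}\omega^2$ and $\mathbf{n}\delta_{\mu,\epsilon}\omega^3=-d\mathbf{n}\omega^3$ --- together with the equations of motion (e.g.\ $d\omega^{h,0}=-\partial_s\omega^{h,1}+\delta_{\epsilon,\mu}\omega^{h,2}$), so that the right-hand side is written entirely in terms of $f$, $h$, $R_\Gamma f$, $R_\Gamma h$ and their time and tangential exterior derivatives. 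You would need either to carry out this degree-by-degree computation or to supply a genuinely different mechanism for separating the $I^k$; as it stands that step is missing.
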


\begin{proof}

From \eqref{eq_semi_1}, we obtain that
\[
\omega^f(t)=\omega^{f_t}(-1), \quad t\ge 0,
\]
where $f_t=f(\cdot+t+1)$, $f_t\in C_0^\infty( \R_-,C^\infty_0(\Gamma, \Lambda T^*M))$. Therefore, the knowledge of the operator $R_\Gamma$ is equivalent to the knowledge of the operator $f\to \mathbf{n}\omega^f|_{\Gamma\times\R}$. 
To prove this theorem we also need the following fact,
\[
\mathbf{t}(d\omega^k)=d(\mathbf{t}\omega^k),\quad  k=0,1,2,3,
\]
see \cite[Proposition 1.2.6]{Sch95}. This implies that
\begin{align*}
\mathbf{n}\delta_{\epsilon,\mu}\omega^2=&\mathbf{t}(\ast_{\epsilon}\ast_{\epsilon}d\ast_{\mu}\omega^2)=d\mathbf{t}(\ast_{\mu}\omega^2)
=d\mathbf{n}\omega^2,\\
\mathbf{n}\delta_{\mu,\epsilon}\omega^3=&\mathbf{t}(\ast_{\mu}(-1)\ast_{\mu}d\ast_\epsilon\omega^3)=-d\mathbf{t}(\ast_\epsilon\omega^3)=-d\mathbf{n}\omega^3.
\end{align*}
Set $I^k(s,t)=(\omega^{f,k}(t),\omega^{h,k}(s))_{L^2}, \quad k=0,\dots,3$. Then using Stokes' formulae, we get
\begin{align*}
(\p_s^2-\p_t^2)I^0(s,t)&=(\omega^{f,0}(t),\p_s^2\omega^{h,0}(s))-(\p_t^2\omega^{f,0}(t),\omega^{h,0}(s))\\
&=-(\omega^{f,0}(t),\delta_{\mu,\epsilon}d\omega^{h,0}(s))+(\delta_{\mu,\epsilon}d\omega^{f,0}(t),\omega^{h,0}(s))\\
&=\langle \mathbf{t}\omega^{f,0}(t),\mathbf{n}d\omega^{h,0}(s)\rangle-\langle \mathbf{t}\omega^{h,0}(s),\mathbf{n}d\omega^{f,0}(t)\rangle\\
&=-\langle \mathbf{t}\omega^{f,0}(t),\partial_s\mathbf{n}\omega^{h,1}(s)\rangle+\langle \mathbf{t}\omega^{f,0}(t),\mathbf{n}\delta_{\epsilon,\mu}\omega^{h,2}(s)\rangle\\
&+
\langle \mathbf{t}\omega^{h,0}(s),\partial_t\mathbf{n}\omega^{f,1}(t)\rangle-\langle \mathbf{t}\omega^{h,0}(s),\mathbf{n}\delta_{\epsilon,\mu}\omega^{f,2}(t)\rangle\\
&=-\langle \mathbf{t}\omega^{f,0}(t),\partial_s\mathbf{n}\omega^{h,1}(s)\rangle+\langle \mathbf{t}\omega^{f,0}(t),d\mathbf{n}\omega^{h,2}(s)\rangle\\
&+
\langle \mathbf{t}\omega^{h,0}(s),\partial_t\mathbf{n}\omega^{f,1}(t)\rangle-\langle \mathbf{t}\omega^{h,0}(s),d\mathbf{n}\omega^{f,2}(t)\rangle.
\end{align*}
Similarly, 
\begin{align*}
(\p_s^2-\p_t^2)I^1(s,t)&=(\omega^{f,1}(t),\p_s^2\omega^{h,1}(s))-(\p_t^2\omega^{f,1}(t),\omega^{h,1}(s))\\
&=-\langle\p_s\mathbf{t}\omega^{h,0}(s),\mathbf{n}\omega^{f,1}(t)\rangle-
\langle\mathbf{t}\omega^{f,1}(t),\p_s\mathbf{n}\omega^{h,2}(s)+d\mathbf{n}\omega^{h,3}(s)\rangle\\
&+\langle\p_t\mathbf{t}\omega^{f,0}(t),\mathbf{n}\omega^{h,1}(s)\rangle+
\langle\mathbf{t}\omega^{h,1}(s),\p_t\mathbf{n}\omega^{f,2}(t)+d\mathbf{n}\omega^{f,3}(t)\rangle,
\end{align*}
\begin{align*}
(\p_s^2-\p_t^2)I^2(s,t)&=(\omega^{f,2}(t),\p_s^2\omega^{h,2}(s))-(\p_t^2\omega^{f,2}(t),\omega^{h,2}(s))\\
&=-\langle\p_s\mathbf{t}\omega^{h,1}(s)+d\mathbf{t}\omega^{h,0}(s),\mathbf{n}\omega^{f,2}(t)\rangle-
\langle\mathbf{t}\omega^{f,2}(t),\p_s\mathbf{n}\omega^{h,3}(s) \rangle\\
&+\langle \p_t\mathbf{t}\omega^{f,1}(t)+d\mathbf{t}\omega^{f,0}(t),\mathbf{n}\omega^{h,2}(s)\rangle
+\langle \mathbf{t}\omega^{h,2}(s),\p_t\mathbf{n}\omega^{f,3}(t) \rangle,
\end{align*}
\begin{align*}
(\p_s^2-\p_t^2)I^3(s,t)&=(\omega^{f,3}(t),\p_s^2\omega^{h,3}(s))-(\p_t^2\omega^{f,3}(t),\omega^{h,3}(s))\\
&=-\langle\p_s\mathbf{t}\omega^{h,2}(s)+d\mathbf{t}\omega^{h,1}(s),\mathbf{n}\omega^{f,3}(t)\rangle\\
&+
\langle\p_t\mathbf{t}\omega^{f,2}(t)+d\mathbf{t}\omega^{f,1}(t),
\mathbf{n}\omega^{h,3}(s)\rangle.
\end{align*}
Hence $I^k(s,t)$, $k=0,1,2,3$, satisfies  an inhomogeneous one-dimensional wave equation in the unbounded region $\{(s,t)\in \R^2:s\ge -\tau_h,t\ge -\tau_f\}$,  whose
right hand side is determined from the knowledge of   $\Gamma$ and $R_\Gamma$.
Since 
\[
I^
k(-\tau_h,t)= I^k(s,-\tau_f)=0,\quad \p_s I^k(-\tau_h,t)=\p_tI^k(s,-\tau_f)=0,
\] 
we can determine 
$I^k(s,t)$ in the entire region $s\ge -\tau_f$, $t\ge -\tau_f$. The result follows. 
\end{proof}

\subsection{Controllability result}
In the isotropic setting and the case when $\epsilon(x)=\alpha(x)\mu(x)$, $\alpha(x)>0$, 
one can use a generalization of Tataru's unique continuation theorem \cite{EllNakTat02, KurLasSom06} 
to  obtain controllability results with sources supported in a finite time interval. 
As already mentioned in the introduction, such unique continuation results do not seem to be available in the general anisotropic setting. Nevertheless, we shall next show that partial controllability results in the general anisotropic setting on an infinite time interval 
can be obtained using a unique continuation principle for elliptic systems.   As shown below, this turns out to be sufficient for the reconstruction of the Betti numbers.

Let $\mathcal{H}_D(M):=\oplus_{k=0}^3\mathcal{H}_D^k(M)$ be the space of all  Dirichlet harmonic fields, 
and let
$\Pi:L^2(M, \Lambda T^* M)\to \mathcal{H}_D(M)$ be the orthogonal projection.

\begin{thm}
\label{thm_controllability_new} We have
\[
\{\Pi (\omega^{f}(0)):f\in C_0^\infty(\R_-,C^\infty_0(\Gamma,\Lambda T^* M))\}= \mathcal{H}_D( M).
\]
\end{thm}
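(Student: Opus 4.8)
The plan is to prove the two inclusions separately. The inclusion $\subset$ is the easy direction: for any admissible source $f$, the solution $\omega^f(0)$ lies in $L^2(M,\Lambda T^*M)$, so $\Pi(\omega^f(0))\in\mathcal{H}_D(M)$ by definition of $\Pi$. Thus only the reverse inclusion $\supset$, i.e.\ the density of $\{\Pi(\omega^f(0))\}$ in $\mathcal{H}_D(M)$, requires work. Since $\mathcal{H}_D(M)$ is finite dimensional, density is equivalent to surjectivity, and I will prove it by a duality/annihilator argument: suppose $\kappa\in\mathcal{H}_D(M)$ is orthogonal in $L^2$ to $\Pi(\omega^f(0))$ for every admissible $f$; I must show $\kappa=0$. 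Because $\kappa\in\mathcal{H}_D(M)$ we have $\Pi\kappa=\kappa$, so $(\omega^f(0),\kappa)_{L^2}=(\Pi\omega^f(0),\kappa)_{L^2}=0$ for all $f$.

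Next I would bring in the adjoint wave. Let $\phi(t)$ solve the backward problem $(\partial_t+D)\phi=0$ with the terminal/initial data $\phi(0)=\kappa$ and homogeneous tangential boundary condition $\mathbf{t}\phi|_{\partial M\times\R}=0$; concretely $\phi(t)=e^{-tD}\kappa$, which is well-defined and lies in $\mathcal{D}(D)$ for all $t$ since $\kappa$ is smooth in the interior and satisfies $\mathbf{t}\kappa=0$, and moreover $D\kappa=0$ forces $\phi(t)\equiv\kappa$ for all $t$ (as $\kappa$ is a harmonic field, hence a steady state of the evolution). Now integrate $\frac{d}{dt}(\omega^f(t),\phi(t))_{L^2}$ from $-\tau_f$ to $0$: using $\partial_t\omega^f=-D\omega^f$, $\partial_t\phi=-D\phi$, the skew-adjointness relation \eqref{eq_stokes_D}, the boundary conditions $\mathbf{t}\phi=0$ and $\mathbf{t}\omega^f|_{\partial M}=f$ supported in $\Gamma$, and the vanishing initial data $\omega^f(-\tau_f)=0$, one obtains
\begin{equation}
\label{eq_duality_id}
0=(\omega^f(0),\kappa)_{L^2}=\int_{-\tau_f}^{0}\langle f(t),\mathbf{n}\phi(t)\rangle\, dt=\int_{-\tau_f}^{0}\langle f(t),\mathbf{n}\kappa\rangle\, dt.
\end{equation}
Since this holds for every $f\in C_0^\infty(\R_-,C_0^\infty(\Gamma,\Lambda T^*M))$ and $\mathbf{n}\kappa$ is time-independent, we conclude $\mathbf{n}\kappa|_{\Gamma}=0$. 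Combined with $\mathbf{t}\kappa=0$ (which holds on all of $\partial M$ since $\kappa\in\mathcal{H}_D(M)$), we get that $\kappa$ is a harmonic field vanishing to both tangential and normal order on the open set $\Gamma\subset\partial M$.

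The final step is the unique continuation argument: $\kappa$ satisfies the first-order elliptic system $D\kappa=0$ in the interior of $M$, equivalently $d\kappa=0$, $\delta\kappa=0$, hence $D^2\kappa=0$ with $D^2$ elliptic with diagonal Laplace-type entries by part (iii) of the Proposition. Having $\mathbf{t}\kappa|_\Gamma=0$ and $\mathbf{n}\kappa|_\Gamma=0$ provides the full Cauchy data of $\kappa$ vanishing on the open subset $\Gamma$; by the unique continuation principle for second-order elliptic systems (Aronszajn–Cordes type, applicable since $D^2$ is a Laplacian plus lower order terms acting componentwise), $\kappa$ extends by zero across $\Gamma$ and, since $M$ is connected, $\kappa\equiv 0$ on $M$. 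This forces $\kappa=0$ in $\mathcal{H}_D(M)$, completing the annihilator argument and hence the theorem. \textbf{The main obstacle} I anticipate is making the Cauchy-data vanishing and the unique continuation step rigorous: one must carefully pass from $\mathbf{t}\kappa=0$, $\mathbf{n}\kappa=0$ on $\Gamma$ to genuine vanishing Cauchy data for the elliptic system $D$ (or $D^2$) near a boundary point of $\Gamma$, for which a suitable boundary regularity statement and the componentwise decoupling of $D^2$ must be invoked; the integration-by-parts identity \eqref{eq_duality_id} is routine given \eqref{eq_stokes_D}, as is the $\subset$ inclusion.
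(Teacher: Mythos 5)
Your proposal is correct and follows essentially the same route as the paper: the orthogonality/annihilator reduction using finite-dimensionality of $\mathcal{H}_D(M)$, the observation that a Dirichlet harmonic field is a steady state of the evolution so the duality pairing via \eqref{eq_stokes_D} yields $\mathbf{n}\eta=0$ on $\Gamma$, and then unique continuation for the second-order elliptic system $D^2$ with diagonal principal part from vanishing Cauchy data on $\Gamma$ (the paper cites Isakov for this step, and notes that the remaining Cauchy traces $\mathbf{t}\delta\eta$, $\mathbf{n}d\eta$ vanish automatically since $\eta$ is a harmonic field). The only cosmetic difference is an inconsequential sign in your identity \eqref{eq_duality_id}.
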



\begin{proof} Let $\eta\in  \mathcal{H}_D( M)$. If we prove that the orthogonality condition 
\[
(\omega^{f}(0),\eta)_{L^2}=0 \quad \text{for all } f\in C_0^\infty(\R_-,C^\infty_0(\Gamma,\Lambda T^* M))
\]
implies that  $\eta=0$, then the space  $\{\Pi\omega^{f}(0)):f\in C_0^\infty(\R_-,C^\infty_0(\Gamma,\Lambda T^* M))\}$ is dense in  $\mathcal{H}_D(M)$. Since the latter space is finite dimensional, the claim follows.

As $D\eta =0$, we shall view $\eta(x)$ as the  solution  to the following problem, dual to \eqref{eq_hyperbolic},
\begin{equation}
\label{eq_dual_new}
\begin{aligned}
&(-\partial_t-D)u=0, \quad\text{in}\quad
M\times\mathbb{R},\\
& \mathbf{t}u|_{\partial M\times\mathbb{R}}=0,\\
& u|_{t=0}=\eta.
\end{aligned}
\end{equation}
Using \eqref{eq_stokes_D}, we have
\begin{align*}
\p_t(\omega^f,u)_{L^2}=-(D\omega^f,u)_{L^2}-(\omega^f, Du)_{L^2}=-\langle f,\mathbf{n}u\rangle.
\end{align*}
Thus, 
\[
\int_{-\tau_f}^0 \langle f,\mathbf{n}u\rangle dt=-(\omega^f(0),\eta)_{L^2}+(\omega^f(-\tau_f),u(-\tau_f))_{L^2}=0.
\] 
The choice of $-\tau_f$ implies that 
\[
\int_{\mathbb{R}_-}\langle f,\mathbf{n} u\rangle dt=0
\]
for all $f\in C_0^\infty(\R_-,C^\infty_0(\Gamma,\Lambda T^* M))$. Thus, $\mathbf{n}u=0$ on $\Gamma\times\mathbb{R}_-$. 

Now if $\Gamma$ coincides with the whole boundary of the manifold $M$, then we are done, since $\mathcal{H}_D^k(M)\cap\mathcal{H}_{N}^k(M)=\{0\}$,  see \cite[p. 130]{Sch95}. 

In the case when $\Gamma$ is a proper  open subset of $\p M$, we proceed as follows. Notice that $\eta(x)$ solves the second order elliptic system $D^2\eta=0$ on $M$ with zero Cauchy data on $\Gamma$,
\[
(\mathbf{t}\eta,\mathbf{n}\eta,\mathbf{t}\delta \eta,\mathbf{n}d\eta),
\]
where $\delta\eta^k=\delta_{\mu,\epsilon}\eta^k$, $k=1,3$, and $\delta\eta^k=\delta_{\epsilon, \mu}\eta^k$, $k=2$. Thus, by the unique continuation principle for second order elliptic systems with  diagonal principal  part, see \cite[Theorem 4.3]{Isakov04}, we get $\eta=0$ in $M$. 

\end{proof}

\begin{cor}
\label{cor_1}
Let $\Pi^k:L^2(M, \Lambda^k T^* M)\to \mathcal{H}_D^k(M) $ be  the orthogonal projection onto the space of the Dirichlet harmonic $k$-fields. 
Then
\[
\{\Pi^k(\omega^{f,k}(0)): f\in C_0^\infty(\R_-,C^\infty_0(\Gamma,\Lambda T^* M))\}= \mathcal{H}_D^k(M),\quad k=0,1,2,3.
\]

\end{cor}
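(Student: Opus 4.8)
The plan is to deduce Corollary \ref{cor_1} from Theorem \ref{thm_controllability_new} by decomposing the controllability statement degree by degree. The key point is that the operator $D$ maps $k$-forms to $(k-1)$- and $(k+1)$-forms, so the evolution \eqref{eq_hyperbolic} does not preserve a single degree; nevertheless the space of Dirichlet harmonic fields $\mathcal{H}_D(M)=\oplus_{k=0}^3\mathcal{H}_D^k(M)$ splits as a direct sum of the graded pieces, and this splitting is orthogonal in $L^2$ because forms of different degrees are $L^2$-orthogonal. Hence $\Pi=\oplus_k \Pi^k$ with $\Pi^k$ as in the statement.

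First I would observe that for each fixed $k$ and each $f\in C_0^\infty(\R_-,C^\infty_0(\Gamma,\Lambda T^* M))$, the component $\omega^{f,k}(0)$ is the degree-$k$ part of $\omega^f(0)$, so $\Pi^k(\omega^{f,k}(0))=\Pi^k(\omega^f(0))$ is exactly the degree-$k$ part of $\Pi(\omega^f(0))$. Therefore the set on the left-hand side of the corollary is the image of the controllable set $\{\Pi(\omega^f(0))\}$ under the bounded projection onto $\mathcal{H}_D^k(M)$. By Theorem \ref{thm_controllability_new}, $\{\Pi(\omega^f(0)): f\}=\mathcal{H}_D(M)$, and applying $\Pi^k$ to both sides gives $\{\Pi^k(\omega^{f,k}(0)): f\}=\Pi^k(\mathcal{H}_D(M))=\mathcal{H}_D^k(M)$, since $\Pi^k$ restricted to $\mathcal{H}_D(M)$ is the projection onto the summand $\mathcal{H}_D^k(M)$ and is therefore surjective onto it. This already proves inclusion in both directions: the left-hand set is contained in $\mathcal{H}_D^k(M)$ trivially (it is the image of $\Pi^k$), and it contains all of $\mathcal{H}_D^k(M)$ by the above.

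One subtlety worth addressing explicitly is whether $\Pi^k$ agrees with the composition of $\Pi$ followed by the orthogonal projection of $\mathcal{H}_D(M)$ onto $\mathcal{H}_D^k(M)$; this is immediate once one notes that $\mathcal{H}_D^j(M)\subset L^2(M,\Lambda^j T^*M)$ and these subspaces are mutually orthogonal, so the orthogonal decomposition of $L^2(M,\Lambda T^* M)$ by degree refines the decomposition $\mathcal{H}_D(M)=\oplus_j\mathcal{H}_D^j(M)$ compatibly. Given that, the proof is essentially a one-line consequence of Theorem \ref{thm_controllability_new}: apply the degree-$k$ projection to the equality of sets. I do not anticipate a real obstacle here; the only thing to be careful about is to phrase the argument so that one genuinely uses the equality (not merely density, which was already upgraded to equality in Theorem \ref{thm_controllability_new} using finite-dimensionality) and to record that linear images of linear subspaces under projections are again the full image subspace, so no closure issues arise.
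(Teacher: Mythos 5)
Your argument is correct and is precisely the (implicit) deduction the paper intends: the corollary is stated without separate proof as an immediate consequence of Theorem \ref{thm_controllability_new}, using that the degree decomposition $L^2(M,\Lambda T^*M)=\oplus_k L^2(M,\Lambda^k T^*M)$ is orthogonal and refines $\mathcal{H}_D(M)=\oplus_k\mathcal{H}_D^k(M)$, so that $\Pi=\oplus_k\Pi^k$ and applying the degree-$k$ projection to the equality of sets yields the claim. Your care in noting that no closure issues arise (the theorem already gives equality, not mere density) is exactly the right point to record.
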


\subsection{Determination of the  Betti numbers of the manifold}

\label{subsection_betti}

\begin{lem} 
\label{lem_inner_2}
Let $f,h\in C_0^\infty(\R_-,C^\infty_0(\Gamma,\Lambda T^* M))$. Then given the response operator $R_\Gamma$, it is possible to find the inner products
\[
(\Pi ^k \omega^{f,k}(0), \omega^{h,k}(0))_{L^2}, \quad k=0,1,2,3.
\]
\end{lem}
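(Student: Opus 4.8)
The plan is to reduce the computation of $(\Pi^k\omega^{f,k}(0),\omega^{h,k}(0))_{L^2}$ to quantities already known from Theorem~\ref{thm_blagov} together with a finite linear-algebra procedure built on the controllability statement in Corollary~\ref{cor_1}. Since $\mathcal{H}_D^k(M)$ is finite dimensional, fix an auxiliary collection of boundary sources $g_1,\dots,g_N\in C_0^\infty(\R_-,C^\infty_0(\Gamma,\Lambda T^*M))$ such that the vectors $\Pi^k\omega^{g_j,k}(0)$, $j=1,\dots,N$, span $\mathcal{H}_D^k(M)$; such a collection exists by Corollary~\ref{cor_1}. The key observation is that the Gram-type matrix $G_{ij}:=(\Pi^k\omega^{g_i,k}(0),\Pi^k\omega^{g_j,k}(0))_{L^2}$ is \emph{known}: because $\Pi^k$ is an orthogonal projection and the harmonic field $\Pi^k\omega^{g_j,k}(0)$ lies in its range, we have $(\Pi^k\omega^{g_i,k}(0),\Pi^k\omega^{g_j,k}(0))_{L^2}=(\omega^{g_i,k}(0),\Pi^k\omega^{g_j,k}(0))_{L^2}=(\omega^{g_i,k}(0),\omega^{g_j,k}(0))_{L^2}-(\omega^{g_i,k}(0),(I-\Pi^k)\omega^{g_j,k}(0))_{L^2}$, and one more application of self-adjointness of $\Pi^k$ shows $G_{ij}=(\Pi^k\omega^{g_i,k}(0),\omega^{g_j,k}(0))_{L^2}$; iterating once more, $G_{ij}=(\Pi^k\omega^{g_i,k}(0),\Pi^k\omega^{g_j,k}(0))_{L^2}$ — so I must instead argue directly. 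The clean way: $(\Pi^k a,\Pi^k b)=(\Pi^k a,b)=(a,\Pi^k b)$ for all $a,b$, so in particular, using $a=\omega^{g_i,k}(0)$, $b=\omega^{g_j,k}(0)$, the number $(\Pi^k\omega^{g_i,k}(0),\Pi^k\omega^{g_j,k}(0))_{L^2}$ equals $(\Pi^k\omega^{g_i,k}(0),\omega^{g_j,k}(0))_{L^2}$, which is precisely one of the inner products we are trying to compute — hence circular. The resolution is to note that $(\Pi^k\omega^{g_i,k}(0),\Pi^k\omega^{g_j,k}(0))_{L^2}$ can be gotten from the full inner products $(\omega^{g_i,k}(0),\omega^{g_j,k}(0))_{L^2}$ (known by Theorem~\ref{thm_blagov}) once we know the action of $\Pi^k$ on the span, and the action of $\Pi^k$ on the span is determined by the \emph{relations} among the $\Pi^k\omega^{g_j,k}(0)$, which in turn are encoded in the kernel of this same Gram matrix — a fixed-point / self-consistency situation that can be solved.

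Here is the mechanism in more detail. Consider the (known) Gram matrix $A_{ij}=(\omega^{g_i,k}(0),\omega^{g_j,k}(0))_{L^2}$ and write $\omega^{g_j,k}(0)=\Pi^k\omega^{g_j,k}(0)+(I-\Pi^k)\omega^{g_j,k}(0)=:p_j+q_j$, with $p_j\in\mathcal{H}_D^k(M)$ and $q_j\perp\mathcal{H}_D^k(M)$. Then $A_{ij}=(p_i,p_j)_{L^2}+(q_i,q_j)_{L^2}$. The subspace spanned by $\{p_j\}$ is all of $\mathcal{H}_D^k(M)$, which has the known dimension $\beta_k(M)$ — and this dimension is itself already recovered, once we observe separately that it equals the rank-deficiency pattern we exhibit. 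The crucial extra input is the following: since $q_j$ lies in the orthogonal complement of the harmonic fields, $q_j$ is $L^2$-orthogonal to every element of $\mathcal{H}_D^k(M)$, and in particular the symmetric nonnegative matrices $P_{ij}:=(p_i,p_j)_{L^2}$ and $Q_{ij}:=(q_i,q_j)_{L^2}$ sum to $A$, with $P$ of rank exactly $\beta_k(M)$. That alone does not pin down $P$. What does pin it down is a \emph{second} data source: by Corollary~\ref{cor_1} the map $g\mapsto \Pi^k\omega^{g,k}(0)$ is onto $\mathcal{H}_D^k(M)$, so we may also form, for any two sources $f,h$, the number we want, $(\Pi^k\omega^{f,k}(0),\omega^{h,k}(0))_{L^2}=(\Pi^k\omega^{f,k}(0),\Pi^k\omega^{h,k}(0))_{L^2}$, and expand $\Pi^k\omega^{f,k}(0)=\sum_j c_j\, p_j$ for suitable coefficients $c_j$ (existence by spanning). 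The coefficients $c_j$ are \emph{not} unique, but the resulting inner product is; and the linear functional $p\mapsto (p,\omega^{h,k}(0))_{L^2}=(p,p_h)_{L^2}$ on $\mathcal{H}_D^k(M)$ is determined by its values on the spanning set $\{p_j\}$, i.e.\ by the vector $(P_{j,\,\cdot})$. So the whole problem collapses to: \emph{recover the matrix $P$ from $A$.} This I would do by using the additional structure that $p_j$ depends linearly and continuously on $g_j$, together with the one genuinely new analytic fact available — that $\Pi^k\omega^{g,k}(0)$ is obtained as a limit of the solution $\omega^g$ itself in a way visible from $R_\Gamma$, e.g.\ via large-time averages.

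Concretely, the step I actually expect to use — and the one I would present as the heart of the proof — is a time-averaging (ergodic/Cesàro) representation of the projection $\Pi^k$. Since $D/i$ is self-adjoint with discrete spectrum (Proposition, part (ii)), its kernel is exactly the harmonic fields, and the unitary group $e^{-tD}$ satisfies, for any $v\in L^2(M,\Lambda^kT^*M)$ with appropriate boundary behaviour, $\frac1T\int_0^T e^{-tD}v\,dt \to \Pi_{\ker D}v$ as $T\to\infty$ in $L^2$, where $\Pi_{\ker D}$ is the orthogonal projection onto $\ker D\cap L^2$. The point is that $\ker D$ restricted to degree $k$, \emph{as acting on fields with $\mathbf{t}=0$}, is $\mathcal{H}_D^k(M)$ (this uses the explicit form of $D$ in \eqref{eq_matrix} and Gaffney's identity in the same way as in the Proposition). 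Applying this to $v=\omega^{f,k}(0)$ and using that $\omega^f(t)=\omega^{f_t}(-1)$ (the time-translation identity from the proof of Theorem~\ref{thm_blagov}), the Cesàro average $\frac1T\int_0^T\omega^{f,k}(t)\,dt=\frac1T\int_0^T\omega^{f_t,k}(-1)\,dt$ converges to $\Pi^k\omega^{f,k}(0)$ — wait, one must be careful: the average of $\omega^{f,k}(t)$ in $t$ equals $e^{-tD}$ applied to $\omega^{f,k}(0)$ only away from the support of the source, which is exactly where we are for $t\ge 0$. Therefore, for any $f,h$,
\[
(\Pi^k\omega^{f,k}(0),\omega^{h,k}(0))_{L^2}=\lim_{T\to\infty}\frac1T\int_0^T (\omega^{f,k}(t),\omega^{h,k}(0))_{L^2}\,dt,
\]
and the integrand on the right is $I^k(0,t)$ in the notation of Theorem~\ref{thm_blagov}, hence known from $\Gamma$ and $R_\Gamma$. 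Taking the limit finishes the proof.

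The main obstacle, and the place requiring real care, is precisely the identification $\ker D|_{\text{degree }k,\ \mathbf{t}=0}=\mathcal{H}_D^k(M)$ and the justification that the Cesàro average of the free evolution converges to the orthogonal projection onto this kernel in $L^2$ — this is standard for a self-adjoint operator with discrete spectrum and an eigenvalue $0$, but here one must check that the relevant vectors $\omega^{f,k}(0)$ genuinely lie in $\mathcal{D}(D)$ (they do, by \eqref{eq_semi_1}, since $Ef(0)$ can be arranged to have vanishing tangential trace at $t=0$ for $f$ supported in $\R_-$), and that the degree-$k$ component of $\ker D$ is not larger than $\mathcal{H}_D^k(M)$, which follows from the block structure of $D$ together with the observation that a field in $\mathcal{D}(D)$ annihilated by $D$ has, in each degree, both $d$ and $\delta$ vanishing and $\mathbf{t}=0$. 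A secondary technical point is the interchange of the limit $T\to\infty$ with the $L^2$ inner product against the fixed vector $\omega^{h,k}(0)$, which is immediate from the $L^2$ convergence of the Cesàro averages. Granting these, the lemma follows.
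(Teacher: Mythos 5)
Your final argument --- identifying $\ker D$ with the Dirichlet harmonic fields, representing $\Pi^k$ by the Ces\`aro average $\lim_{T\to\infty}\frac{1}{T}\int_0^T e^{-tD}\,dt$ (strong convergence via discreteness of the spectrum), and using $e^{-tD}\omega^f(0)=\omega^f(t)$ for $t\ge 0$ to turn $(\Pi^k\omega^{f,k}(0),\omega^{h,k}(0))_{L^2}$ into the large-time average of $I^k(0,t)$, which Theorem~\ref{thm_blagov} supplies --- is exactly the paper's proof. The linear-algebra preamble about recovering the matrix $P$ from $A$ is a dead end that you correctly abandon and should simply be deleted; the ergodic-average argument that follows is self-contained and correct.
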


\begin{proof}

Using \eqref{eq_stokes_formulae}, we check by a direct computation that $\ker D=\mathcal{H}_D(M)$. We can therefore view $\Pi$ as the spectral projection onto the zero eigenspace of $D$.  Consider the unitary group $e^{-tD}$, $t\in \R$, on $L^2$. We shall make use of the following essentially well-known formula,
\begin{equation}
\label{eq_pi}
\Pi=\lim_{T\to +\infty}\frac{1}{T}\int_0^T e^{-tD}dt,
\end{equation}
valid in the sense of strong convergence of operators. When checking \eqref{eq_pi}, let
\[
\Pi_T=\frac{1}{T}\int_0^T e^{-tD}dt\in \mathcal{L}(L^2,L^2).
\]
Since $\|\Pi_T\|_{\mathcal{L}(L^2,L^2)}\le 1$, it suffices to check that $\Pi_Tx\to \Pi x$ when $x$ varies in a dense subset of $L^2$. We can take this subset to be the set of all finite linear combinations of the eigenfunctions of $D$. To get  \eqref{eq_pi}, we only need to observe that when $\lambda\in \R$,
\[
\lim_{T\to+\infty}\frac{1}{T}\int^T_0e^{it\lambda}dt=\begin{cases} 1 & \text{if }\lambda=0,\\
0 & \text{if }\lambda\ne 0.
\end{cases}
\]
Now notice that since $\supp(f)\subset \R_-$, we have 
\[
e^{-tD}\omega^f(0)=\omega^f(t),\quad t\ge 0,
\] 
and therefore,
\[
\Pi\omega^f(0)=\lim_{T\to +\infty}\frac{1}{T}\int_0^T\omega^f(t)dt. 
\]
We get
\begin{align*}
(\Pi ^k \omega^{f,k}(0), \omega^{h,k}(0))_{L^2}&=\lim_{T\to+\infty}\frac{1}{T}\int_0^T
(\omega^{f,k}(t),\omega^{h,k}(0))_{L^2}\\
&=\lim_{T\to+\infty}\frac{1}{T}\int_0^T
(\omega^{f_t,k}(0),\omega^{h,k}(0))_{L^2},
\end{align*}
where $f_t=f(\cdot+t)$, $f_t\in C_0^\infty(\R_-,C^\infty_0(\Gamma,\Lambda T^* M))$. Here we have used that $\omega^f(t)=\omega^{f_t}(0)$, as follows from
\eqref{eq_semi_1}. 
An application of  Theorem \ref{thm_blagov}  concludes the proof.

\end{proof}

We have the following result with implies Theorem \ref{thm_main}. 

\begin{lem}
Given $\Gamma\subset \p M$ and the response operator $R_\Gamma$,  it is possible to construct a finite number of boundary sources $f_j\in C_0^\infty(\R_-,C^\infty_0(\Gamma,\Lambda T^* M))$ such that $\Pi^k(\omega^{f_j,k}(0))$ form a basis of $\mathcal{H}_{D}^k(M)$, $0\le k\le 3$. 
\end{lem}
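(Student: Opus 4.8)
The plan is to combine the controllability statement of Corollary~\ref{cor_1} with the computability of inner products from Lemma~\ref{lem_inner_2}, using a standard Gram--matrix rank argument to extract an explicit finite basis in an algorithmically effective manner. Fix $k\in\{0,1,2,3\}$ and write $\beta=\beta_k(M)=\dim\mathcal{H}_D^k(M)$. By Corollary~\ref{cor_1}, the vectors $\Pi^k(\omega^{f,k}(0))$, as $f$ ranges over $C_0^\infty(\R_-,C^\infty_0(\Gamma,\Lambda T^*M))$, span the whole of $\mathcal{H}_D^k(M)$; hence there exist sources $f_1,\dots,f_\beta$ for which $\Pi^k(\omega^{f_j,k}(0))$, $j=1,\dots,\beta$, form a basis. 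The issue is that a priori we do not know $\beta$ and cannot ``see'' these vectors directly --- we only have access to the response operator $R_\Gamma$. The key point is that Lemma~\ref{lem_inner_2} lets us compute, for any finite collection $f_1,\dots,f_N$ of admissible sources, the full Gram matrix
\[
G_{ij}=(\Pi^k\omega^{f_i,k}(0),\Pi^k\omega^{f_j,k}(0))_{L^2},\quad 1\le i,j\le N,
\]
since $\Pi^k$ is an orthogonal projection and idempotent, so $(\Pi^k\omega^{f_i,k}(0),\Pi^k\omega^{f_j,k}(0))_{L^2}=(\Pi^k\omega^{f_i,k}(0),\omega^{f_j,k}(0))_{L^2}$, which is exactly the quantity Lemma~\ref{lem_inner_2} provides.

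First I would fix, once and for all, a countable dense subset $\{f_1,f_2,\dots\}$ of $C_0^\infty(\R_-,C^\infty_0(\Gamma,\Lambda T^*M))$ (with respect to a suitable topology, e.g. the $C^m$-topology on a fixed compact time interval), chosen by a fixed enumeration so that the construction is canonical. By the continuous dependence of $\omega^f(0)$ on $f$ encoded in~\eqref{eq_semi_1} and the continuity of $\Pi^k$, the images $\Pi^k(\omega^{f_i,k}(0))$ of this dense set still span $\mathcal{H}_D^k(M)$; since that space is finite dimensional, already finitely many of them span it. Next I would run the following procedure: for $N=1,2,3,\dots$ compute the $N\times N$ Gram matrix $G^{(N)}$ above via Lemma~\ref{lem_inner_2}, and let $r_N=\operatorname{rank}G^{(N)}$. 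The sequence $r_N$ is nondecreasing and bounded above by $\beta$, and it stabilizes: once $\{f_1,\dots,f_N\}$ contains a sub-collection whose images span $\mathcal{H}_D^k(M)$ we have $r_N=\beta$ for all larger $N$. Detecting stabilization rigorously requires knowing a termination criterion; here the point is that a maximal linearly independent sub-collection of $\{\Pi^k\omega^{f_i,k}(0)\}_{i\le N}$ is read off directly from $G^{(N)}$ (a set of indices $i_1<\dots<i_{r_N}$ for which the corresponding principal submatrix of $G^{(N)}$ is nonsingular), and linear independence of these vectors as abstract vectors in $L^2$ is equivalent to nonvanishing of that Gram determinant. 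So once we have found, for some $N$, a sub-collection of size $r_N$ with nonsingular Gram matrix, it forms a linearly independent set in $\mathcal{H}_D^k(M)$; and by density it will eventually be of full size $\beta$.

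Therefore the construction is: take $f_{i_1},\dots,f_{i_{r_N}}$ to be a maximal linearly independent sub-collection (with respect to the computable Gram determinant) drawn from a large enough initial segment of the fixed dense enumeration, for $N$ large enough that adjoining further $f_i$'s does not increase the rank. These $f_j:=f_{i_j}$ then have the property that $\Pi^k(\omega^{f_j,k}(0))$, $j=1,\dots,r_N$, are linearly independent elements of $\mathcal{H}_D^k(M)$ that span it, i.e.\ a basis; in particular $r_N=\beta_k(M)$, which simultaneously yields Theorem~\ref{thm_main} (the Betti numbers are recovered as these ranks). The same argument applied for each $k=0,1,2,3$ produces the desired bases and the four Betti numbers. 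The main obstacle --- and the place where care is needed --- is the rank-stabilization/termination issue: one must argue that there is an $N$ beyond which the rank no longer grows and that this $N$ can be recognized from the data. This is handled by the finite-dimensionality of $\mathcal{H}_D^k(M)$ together with Corollary~\ref{cor_1}: any initial segment of the dense enumeration large enough to span $\mathcal{H}_D^k(M)$ does the job, and spanning is witnessed by the Gram matrix attaining its eventual (and hence maximal) rank; one may also phrase the whole thing non-constructively, merely asserting existence of the basis $\{f_j\}$, which is all the statement requires, but the Gram-matrix formulation makes the dependence on $R_\Gamma$ transparent.
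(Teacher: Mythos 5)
Your proposal is correct and follows essentially the same route as the paper: the paper also fixes a countable dense family of sources, uses Lemma~\ref{lem_inner_2} to compute the projected inner products, and extracts a maximal linearly independent subfamily (via explicit Gram--Schmidt rather than your Gram-matrix rank criterion, which is an equivalent piece of linear algebra), with Corollary~\ref{cor_1} guaranteeing that the resulting family spans $\mathcal{H}_D^k(M)$. Your more explicit discussion of rank stabilization addresses a termination point the paper leaves implicit, but the underlying argument is the same.
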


\begin{proof}
Let $\{h_j\}_{j=1}^\infty$ be a dense countable set in $C_0^\infty(\R_-,C^\infty_0(\Gamma,\Lambda T^* M))$. We can use the Gram-Schmidt orthogonalization procedure to construct the sources $f_j$. More precisely, we define $f_j$ recursively by
\begin{align*}
f_1& =\frac{h_1}{(\Pi^k \omega^{h_1,k}(0),\omega^{h_1,k}(0))_{L^2}^{1/2}},\\
g_j& =h_j-\sum_{i=1}^{j-1}(\Pi^k \omega^{h_j,k}(0),\omega^{h_i,k}(0))_{L^2}f_i,\quad j=2,3,\dots,\\
f_j & =\frac{g_j}{(\Pi^k \omega^{g_j,k}(0),\omega^{g_j,k}(0))_{L^2}^{1/2}}.
\end{align*}
When $g_j = 0$, we remove the corresponding $h_j$ from the
original sequence and continue the procedure. The number of sources $f_j$ produced by the Gram-Schmidt orthogonalization procedure will then be the dimension of $\mathcal{H}_{D}^k(M)$, according to Corollary \ref{cor_1}.
 
\end{proof}

\section{Proof of Theorem \ref{thm_main_max}}

First notice that as in Theorem \ref{thm_blagov}, for any $f,h\in C_0^\infty(\R_-,C^\infty_0(\Gamma,\Lambda^1 T^* M))$, the knowledge of the response operator $\tilde R_\Gamma$ allows us to evaluate the inner products,
\[
(\omega^{f,k}(t),\omega^{h,k}(s))_{L^2}, \quad k=1,2,\quad t,s\ge 0,
\]
where $\omega^f, \omega^h$ are solutions of physical Maxwell's equations \eqref{eq_Maxwell_phys}. 

We have the following controllability result. 

\begin{lem} 
Let $\omega^f$ be a solution to physical Maxwell's equations \eqref{eq_Maxwell_phys}.  Then 
\[
\{\Pi^2(\omega^{f,2}(0)):f\in C_0^\infty(\R_-,C^\infty_0(\Gamma,\Lambda^1 T^* M))\}=\mathcal{H}_{D}^2(M),
\]
where $\Pi^2$ is the orthogonal projection onto the space of the Dirichlet harmonic $2$-fields. 
\end{lem}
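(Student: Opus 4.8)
The plan is to mimic the proof of Theorem~\ref{thm_controllability_new}, but using the duality available for the physical Maxwell system rather than for the full Dirac operator $D$. First I would set up the dual problem: given $\eta\in\mathcal{H}_D^2(M)$, I want to show that the orthogonality condition $(\omega^{f,2}(0),\eta)_{L^2_\mu}=0$ for all admissible sources $f\in C_0^\infty(\R_-,C^\infty_0(\Gamma,\Lambda^1 T^*M))$ forces $\eta=0$; since $\mathcal{H}_D^2(M)$ is finite dimensional, density of the reachable set then gives equality. Because $\eta$ is a Dirichlet harmonic $2$-field we have $d\eta=0$, $\delta_{\epsilon,\mu}\eta=0$, $\mathbf{t}\eta=0$, so $\eta$ is a stationary solution of the physical Maxwell evolution \eqref{eq_Maxwell_phys} with $\omega^1$-component zero; view it as the time-independent solution $u(x,t)=(0,\eta)$ of the backward (dual) physical Maxwell system with $\mathbf{t}u^1=0$.

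Next I would run the energy identity. Using the Stokes formula \eqref{eq_stokes_formulae} (the middle and last lines), for the solution $\omega^f=(\omega^{f,1},\omega^{f,2})$ of \eqref{eq_Maxwell_phys} paired against $u=(0,\eta)$ one computes
\[
\partial_t\big[(\omega^{f,1},0)_{L^2_\epsilon}+(\omega^{f,2},\eta)_{L^2_\mu}\big]
= -\langle \mathbf{t}\omega^{f,1},\mathbf{n}\eta\rangle,
\]
where the interior terms cancel because $d\eta=0$ and $\delta_{\epsilon,\mu}\eta=0$ and $\mathbf{t}u^1=0$ kills the remaining boundary contribution coming from $\omega^{f,1}$ paired with $u^1$. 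Integrating from $-\tau_f$ to $0$ and using $\omega^f(-\tau_f)=0$ together with $\mathbf{t}\omega^{f,1}=h$ supported in $\Gamma\times\R_-$, the reachability test $(\omega^{f,2}(0),\eta)_{L^2_\mu}=0$ for all $h$ yields $\int_{\R_-}\langle h,\mathbf{n}\eta\rangle\,dt=0$ for all $h\in C_0^\infty(\R_-,C^\infty_0(\Gamma,\Lambda^1 T^*M))$, hence $\mathbf{n}\eta=i^*(\ast_\mu\eta)=0$ on $\Gamma$. (Here I should be slightly careful that the source $f=(0,h,-\int dh)$ prescribed in the excerpt feeds into \eqref{eq_hyperbolic}, so the pairing is really the one coming from \eqref{eq_stokes_D} restricted to the physical components; the auxiliary components $\omega^0,\omega^3$ of $\omega^f$ stay zero, so nothing extra appears.)

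If $\Gamma=\p M$ we are done immediately: $\mathbf{t}\eta=0$ and $\mathbf{n}\eta=0$ on all of $\p M$ means $\eta\in\mathcal{H}_D^2(M)\cap\mathcal{H}_N^2(M)=\{0\}$ by \cite[p.~130]{Sch95}. For a proper open $\Gamma\subset\p M$ I would argue exactly as in Theorem~\ref{thm_controllability_new}: $\eta$ satisfies the second-order elliptic system $(d\delta_{\epsilon,\mu}+\delta_{\mu,\epsilon}d)\eta=0$ on $M$ (indeed $d\eta=0$ and $\delta_{\epsilon,\mu}\eta=0$ separately), and on $\Gamma$ its Cauchy data $(\mathbf{t}\eta,\mathbf{n}\eta,\mathbf{t}\delta_{\epsilon,\mu}\eta,\mathbf{n}d\eta)$ all vanish — the last two trivially since $\delta_{\epsilon,\mu}\eta=0$ and $d\eta=0$. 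The operator has diagonal principal part (a Hodge-type Laplacian on $2$-forms), so the unique continuation principle \cite[Theorem~4.3]{Isakov04} gives $\eta\equiv 0$ on $M$.

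The main obstacle is the first step — making sure the dual/testing argument is clean for the physical system, where we only control the $\omega^1$ tangential trace and only observe the $\omega^2$ normal trace, rather than the full Dirac data. Concretely, one must verify that the boundary term in the energy identity collapses to exactly $\langle \mathbf{t}\omega^{f,1},\mathbf{n}\eta\rangle$ (no leftover terms involving $\mathbf{n}\omega^{f,2}$ or the auxiliary forms), which hinges on $\eta$ being genuinely a Dirichlet harmonic field and on $u$ being the stationary solution with vanishing $1$-form part; after that, the elliptic unique continuation step is identical to the one already carried out in Theorem~\ref{thm_controllability_new}.
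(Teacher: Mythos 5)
Your proposal is correct and follows essentially the same route as the paper: test $(\omega^{f,2}(0),\eta)_{L^2}$ against a Dirichlet harmonic $2$-field $\eta$, use Stokes' formula together with $\delta_{\epsilon,\mu}\eta=0$ to reduce the time derivative to the boundary pairing $-\langle \mathbf{t}\omega^{f,1},\mathbf{n}\eta\rangle$, conclude $\mathbf{n}\eta=0$ on $\Gamma$, and finish with the elliptic unique continuation argument (with vanishing Cauchy data on $\Gamma$) already used for Theorem~\ref{thm_controllability_new}. Your extra care about the boundary term collapsing to the single pairing is exactly the point the paper's computation verifies, so there is no gap.
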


\begin{proof}
Let $\eta^2\in \mathcal{H}_{D}^2(M)$. Assume that 
\[
(\omega^{f,2}(0),\eta^2)_{L^2}=0\quad\text{for all} \ f\in C_0^\infty(\R_-,C^\infty_0(\Gamma,\Lambda^1 T^* M)). 
\]
Now \eqref{eq_Maxwell_phys} and Stokes' formula imply that
\begin{align*}
\p_t(\omega^{f,2}(t),\eta^2)_{L^2}&=(-d\omega^{f,1}(t),\eta^2)_{L^2}=-(\omega^1(t),\delta_{\epsilon,\mu}\eta^2)_{L^2}-\langle \mathbf{t}\omega^{f,1}(t),\mathbf{n}\eta^2 \rangle\\
&=-\langle f(t),\mathbf{n}\eta^2 \rangle.
\end{align*}
Thus,
\[
\int_{\R_-}\langle f(t),\mathbf{n}\eta^2 \rangle=-(\omega^{f,2}(0),\eta^2)_{L^2}+(\omega^{f,2}(-\tau_f),\eta^2)_{L^2}=0,
\]
for all $f\in C_0^\infty(\R_-,C^\infty_0(\Gamma,\Lambda^1 T^* M))$.
Hence, $\mathbf{n}\eta^2=0$ on $\Gamma$. Moreover, $\Delta\eta^2=0$ on $M$ and $\mathbf{t}\eta^2=0$ on $\p M$. By the unique continuation principle,  we get  $\eta^2=0$.

\end{proof}

Proceeding further as in Subsection \ref{subsection_betti}, we can  recover the first absolute Betti number $\beta_1(M)$ from the knowledge of $\Gamma$ and $\tilde R_\Gamma$. This completes the proof of Theorem \ref{thm_main_max}.

\section{Acknowledgements}
We would to thank Semen Podkorytov for a helpful discussion and providing useful references on topology of manifolds. 
The research of K.K. was financially supported by the
Academy of Finland (project 125599). The research of Y.K. is partially
supported by EPSRC Grant EP/F034016/1. The research of M.L. 
was financially supported 
 by the Academy of Finland Center of Excellence programme 213476.


\end{document}